\documentclass[12pt,a4paper,usenames,dvipsnames]{article}

\usepackage[utf8]{inputenc}
\usepackage{fullpage}
\usepackage{psfrag}
\usepackage{amsmath}
\usepackage{fancybox}
\usepackage{amsfonts}
\usepackage{amsthm}
\usepackage{amssymb}
\usepackage{comment}
\usepackage{graphicx}

\usepackage{booktabs}

\usepackage{caption}
\usepackage{subcaption}
\usepackage{geometry}
\geometry{margin=2.5cm}
\renewcommand{\arraystretch}{1.3} 

\usepackage[normalem]{ulem} 
\usepackage{tikz}
\usetikzlibrary{decorations.pathreplacing}
\usepackage{enumerate}
\usepackage{MnSymbol}
\usepackage{bm}
\usepackage{multirow}
\usepackage{colortbl}

\usepackage{caption}
\usepackage{algorithm}
\usepackage{setspace}
\usepackage{algpseudocode}

\usepackage{booktabs}
\usepackage{longtable}
\usepackage{colortbl}
\usepackage{multirow}

\usepackage{pifont}
%
%

 \usepackage[colorinlistoftodos,prependcaption,spanish,textsize=footnotesize]{todonotes}
 
\newcounter{todoListItems} 
\newcommand{\todoTrans}[2][ ]{
\addtocounter{todoListItems}{1}
\todo[#1]
{#2 \hfill \hyperlink{todo\thetodoListItems}{$\uparrow$}}}


 \usepackage{xargs} 
 

\newcommandx{\NanotodoPB}[2][1=]{\todoTrans[caption={\protect\hypertarget{todo\thetodoListItems}{} #1},
inline,linecolor=OliveGreen,backgroundcolor=OliveGreen!25,bordercolor=OliveGreen]{#2}}

\newcommandx{\NanotodoPM}[2][1=]{\todoTrans[caption={\protect\hypertarget{todo\thetodoListItems}{} #1},
inline,linecolor=orange,backgroundcolor=orange!25,bordercolor=orange]{#2}}

\newcommandx{\NanotodoPA}[2][1=]{\todoTrans[caption={\protect\hypertarget{todo\thetodoListItems}{} #1},
inline,linecolor=red,backgroundcolor=red!25,bordercolor=red]{#2}}

\newcommandx{\Edutodo}[2][1=]{\todoTrans[caption={\protect\hypertarget{todo\thetodoListItems}{} #1},
inline,linecolor=blue,backgroundcolor=blue!25,bordercolor=blue]{#2}}

\newcommandx{\Marcetodo}[2][1=]
{\todoTrans[caption={\protect\hypertarget{todo\thetodoListItems}{} #1},
inline,linecolor=Plum,backgroundcolor=Plum!25,bordercolor=Plum]{#2}}

 \theoremstyle{plain}
    \newtheorem{theorem}{Theorem}[section]

    \newtheorem{proposition}[theorem]{Proposition}
    
 \theoremstyle{definition}
    
    \newtheorem{remark}[theorem]{Remark}
    
    \newtheorem{procedure}[theorem]{Procedure}
 \theoremstyle{remark}


 \usepackage[english]{babel}

\usepackage[%
   bookmarks=true,
   pagebackref=false, 
   colorlinks,
   linkcolor=blue,
   anchorcolor=blue,
   citecolor=blue,
   filecolor=blue,
   urlcolor=blue
   ]{hyperref}

\parskip=6pt

 

\usepackage{bm}
\usepackage{xspace}

\usepackage{color}
\definecolor{miverde}{RGB}{128,255,0}
\definecolor{minaranja}{RGB}{255,128,1}
\definecolor{gris}{RGB}{128, 128, 128}

\DeclareMathOperator{\vect}{vec}

\newcommand\NN{\mathbb N}

\newcommand\BB{\mathcal B}

\newcommand{\RR}{\mathbb R}
\newcommand{\h}{h}

\renewcommand{\Xi}{{\bm\xi}}

\renewcommand{\nu}{{\bm\nu}}

\usepackage{authblk}
\newcommand{\norm}[1]{{\left\vert\kern-0.25ex\left\vert\kern-0.25ex\left\vert #1 
    \right\vert\kern-0.25ex\right\vert\kern-0.25ex\right\vert}}



\newcommand\SSS{\mathcal S}

 \sloppy 
 
\begin{document}

\title{\bf Left Inverses for B-spline Subdivision Matrices in Tensor-Product Spaces}
\author{Marcelo Actis, Silvano Figueroa \& Eduardo M. Garau\thanks{Corresponding author: Eduardo M. Garau (egarau@santafe-conicet.gov.ar)}}
	\affil{\small Universidad Nacional del Litoral, \\
 Consejo Nacional de Investigaciones Cient\'ificas y T\'ecnicas, \\
 FIQ, Santa Fe, Argentina}


\maketitle

\begin{abstract}
	In this article, we study dyadic coarsening operators in univariate spline spaces and in tensor-product spline spaces over uniform grids. 
	Our construction is strongly motivated by the work of Bartels, Golub, and Samavati (2006), \emph{Some observations on local least squares}, \textit{BIT}, 46(3):455–477. 
	The proposed operators are local in nature and yield approximations to a given spline that are comparable to the global \(L^2\)-best approximation, while being significantly faster to compute and computationally inexpensive.
\end{abstract}


\begin{quote}\small
\textbf{Keywords:} left inverses, data reduction, knot removal, coarsening, B-spline subdivision
\end{quote}




\section{Introduction}\label{Introduccion}

When we consider a spline space of maximum smoothness over a uniform partition, every function in this space also belongs to the larger spline space defined over the refined partition obtained by inserting all midpoints. Lane and Riesenfeld~\cite{LR1980} introduced a geometric approach to compute the B-spline coefficients in the refined space from the coefficients in the coarse space. Algebraically, this process corresponds to applying the uniform knot insertion matrix between the two spaces, which we refer to as the \emph{subdivision matrix}. Structurally, the subdivision matrix exhibits a banded sparsity pattern, reflecting the compact support of B-spline basis functions. The non-zero entries in each column represent the coefficients of the linear combination of fine-level B-splines that reconstruct each coarse-level B-spline in the refined space.

On the other hand, left inverses of subdivision matrices provide a way to transfer coefficients from fine B-splines to coarse ones. The pattern—specifically, the number and position of non-zero elements in each row—indicates which fine B-splines are combined through a precise linear relation to construct the corresponding coefficient of a coarse function.

In this article we use the local least-squares technique introduced in~\cite{BGS2006-local-least-squares} to derive explicit left-inverse formulas for the subdivision matrix of B-splines. These left inverses are band matrices and enable us to obtain a B-spline representation in a coarse space that is comparable in quality to the one obtained by solving a global least-squares system. Each column of these matrices contains the coefficients that transfer information from a fine B-spline to a small set of nearby ancestors at the coarsest level. In~\cite{BGS2006-local-least-squares}, the properties of the proposed method for B-spline subdivision matrices were examined only through numerical tests. The aim of the present paper is to further develop and strengthen this analysis.

In~\cite{BS2000} the problem of constructing a left inverse for subdivision matrices was addressed in the context of local multiresolution filters for quadratic and cubic B-splines. A relevant and comparable aspect to our objective is that the proposed method reduces the problem to local cases by exploiting the specific structure of the subdivision matrices. In general, this leads to underdetermined systems of equations, whose resolution is purely algebraic, selecting the solution that minimizes the Euclidean norm among all possible ones. On the other hand, in~\cite{BRSF2011}, the same goal is pursued, but with a significant methodological improvement based on singular value decomposition (SVD), which allows for a more robust and stable approach to the problem. However, in both studies the determination of local problem sizes is context-dependent, lacking a generalizable selection strategy.

In~\cite{Brenna2002} two different approaches to knot removal are presented, together with strategies for finding suitable approximations. For both methods, the approximations are carried out using the algorithm proposed in~\cite{LM87}, extended to tensor-product splines. The methods differ in how they apply the knot removal process. The first approach addresses all parametric directions of a tensor-product spline simultaneously, while the second treats one parametric direction at a time.

Finally, we note that a rigorous error analysis of the coarsening process in the univariate spline case has been carried out in~\cite{FGM24}. We also emphasize that the analysis developed in the present article provides a foundation for the design and study of coarsening strategies in multivariate hierarchical spline spaces~\cite{Kraft, GJS14}. These spaces form a highly suitable framework for adaptive methods in partial differential equations as well as for function and data approximation. More specifically, hierarchical spline spaces possess a multilevel structure: the initial level is a tensor-product spline space over a uniform mesh, and each subsequent level is obtained by dyadic refinement of the preceding one. The B-splines at each level are supported on specific subregions of the domain, making the design of coarsening operators between levels—particularly local ones—both natural and advantageous.

This article is organized as follows. In Section~\ref{sec:b_splines_and_subdivision_matrices}, we introduce the framework and recall the basic concepts required throughout the paper. Section~\ref{sec:left_inverses} is devoted to the construction of coarsening operators and to an exploration of their structure. In Section~\ref{sec:tensor product}, we briefly review the construction of tensor-product spline spaces and extend the univariate coarsening operators to this setting. Finally, in Section~\ref{sec:tests}, we examine the stability and accuracy of the proposed operators and illustrate their performance through numerical experiments.

\section{B-splines and subdivision matrices}\label{sec:b_splines_and_subdivision_matrices}

Let \( p \in\NN\) be fixed, and let \( \hat{\mathcal{S}} \) be the space of polynomial splines of degree at most $p$ with maximum smoothness over the partition \( \hat{Z} \) of a given interval $[a,b]\subset\RR$, into \( \hat{N} \) equidistributed breakpoints, namely  
\[
\hat{Z} = \{a= \hat{\zeta}_1 , \hat{\zeta}_2, \dots, \hat{\zeta}_{\hat{N}-1}, \hat{\zeta}_{\hat{N}} = b \}.
\]
Let \( \hat{\mathcal{B}} = \{ \hat{\beta}_1,\dots,\hat{\beta}_{\hat{n}}\}\) be the B-spline basis (cf.~\cite{DeBoor, Schumi}) associated with the \((p+1)\)-open knot vector \( \hat{\Xi} \) given by 
\[
\hat{\bm{\xi}} = \{\hat{\xi}_1, \dots, \hat{\xi}_{\hat{n}+p+1}\}= \{ \underbrace{a, \dots, a}_{p+1}, \hat{\xi}_{p+2}, \hat{\xi}_{p+3}, \dots, \hat{\xi}_{\hat{n}}, \underbrace{b, \dots, b}_{p+1} \},
\]
where  
\(
\hat{n} = p+\hat{N}-1\), and \(\hat{\xi}_i = \hat{\zeta}_{i-p}\), for \(p+2 \leq i \leq \hat{n}.
\)
Unlike several works in the literature that restrict attention to uniform partitions, here we consider B-spline bases associated with open knot vectors. This choice is particularly convenient for imposing boundary conditions, both in quasi-interpolation and in the numerical solution of differential equations.

Next, we refine the partition \(\hat{Z}\) by inserting the midpoint of each subinterval, obtaining a new partition \( Z \):  
\[
Z = \{\zeta_1,\dots,\zeta_N\}=\{ \hat\zeta_1 , m_1, \hat\zeta_2, m_2, \dots, \hat\zeta_{\hat{N}-1}, m_{\hat{N}-1}, \hat\zeta_{\hat{N}} \},
\]
where the midpoints are defined by  
\(
m_i = \tfrac{\hat{\zeta}_{i} + \hat{\zeta}_{i+1}}{2},\) for  \(1 \leq i \leq \hat{N}-1.
\)
The refined partition \( Z \) then consists of \( N = 2\hat{N} - 1 \) breakpoints. The odd-indexed breakpoints correspond to the original partition,  
\[
\zeta_{2j-1} = \hat{\zeta}_j, \quad 1 \leq j \leq \hat{N},
\]
while the even-indexed breakpoints are the newly inserted midpoints:  
\[
\zeta_{2j} = m_j, \quad 1 \leq j \leq \hat{N} - 1.
\]
The spline space \( \mathcal{S} \) of maximum smoothness over \( Z \) has dimension  
\(
n = p+N-1.
\)
Let \( \mathcal{B} :=\{\beta_1,\dots,\beta_n\}\) be the corresponding B-spline basis associated with the $(p+1)$-open knot vector \( \bm{\xi} \), given by 
\[
\bm{\xi} = \{\xi_1, \dots, \xi_{n+p+1}\}= \{ \underbrace{a, \dots, a}_{p+1}, \xi_{p+2}, \xi_{p+3}, \dots, \xi_{n}, \underbrace{b, \dots, b}_{p+1} \},
\]
where the internal knots are  
\(
\xi_i = \zeta_{i-p}\), for \(p+2 \leq i \leq n.
\)
By construction, the knot vector \( \bm{\xi} \) refines \( \hat{\bm{\xi}} \), and its internal knots satisfy 
\[
\xi_{p+2j-1} = \hat{\xi}_{p+j}, \quad 1 \leq j \leq \hat{N}, \quad \text{and} \quad \xi_{p+2j} = m_{j}, \quad 1 \leq j \leq \hat{N}-1.
\]
Thus, the refined knot vector \( \bm{\xi} \) contains all the knots of \( \hat{\bm{\xi}} \), together with additional midpoints (cf. Figure~\ref{F:partitions}), which yield a higher-resolution spline space \( \mathcal{S} \). In other words, \(\hat{\mathcal{S}}\subset\mathcal{S}\).

\begin{figure}[htbp]
\begin{tikzpicture}[scale=5.5]

    \def\Nhat{6}
    \def\p{2}
    \pgfmathsetmacro{\h}{1/(\Nhat-1)}
    \pgfmathsetmacro{\N}{2*(\Nhat-1)+1}
    \pgfmathsetmacro{\hf}{1/(\N-1)}
    \def\sep{0.27} 
    \def\xshift{1.55}
    \def\extsep{0.025} 
    \def\markerSize{0.015}
    \def\markerOffset{0.015} 
  
    \newcommand{\squaremarker}[2]{
      \fill[blue] (#1,#2+\markerOffset) rectangle ++(\markerSize,-\markerSize);
    }
  
    \newcommand{\circlemarker}[2]{
      \fill[red] (#1,#2) circle[radius=0.011];
    }
  
    \begin{scope}
  
      \begin{scope}[yshift=\sep cm]
        \draw[thick] (0,0) -- (1,0);
        \node[above=20pt,font=\large] at (0.5,0) {$\hat Z$};
  
        \foreach \i in {1,...,\Nhat} {
          \pgfmathsetmacro{\x}{(\i-1)*\h}
          \squaremarker{\x}{0}
          \node[above=1pt,blue] at (\x,0) {$\hat\zeta_{\i}$};
        }
\foreach \i in {1,...,6} {
    \pgfmathsetmacro{\x}{(\i-1)*\h}
    \draw[dashed,gray] (\x,0) -- (\x,-\sep);
  }

      \end{scope}
  
      \begin{scope}[yshift=-\sep cm]
        \draw[thick] (0,0) -- (1,0);
        \node[below=20pt,font=\large] at (0.5,0) {$Z$};
  
        \foreach \j in {1,...,\N} {
          \pgfmathsetmacro{\x}{(\j-1)*\hf}
          \circlemarker{\x}{0}
          \node[below=1pt,red] at (\x,0) {$\zeta_{\j}$};
        }
  
        \foreach \i in {1,...,\Nhat} {
          \pgfmathsetmacro{\x}{(\i-1)*\h}
          \draw[dashed,gray] (\x,\sep) -- (\x,0);
        }

      \end{scope}
  
    \end{scope}
  
    \begin{scope}[xshift=\xshift cm]
  
      \begin{scope}[yshift=\sep cm]
        \draw[thick] (0,0) -- (1,0);
        \node[above=20pt,font=\large] at (0.5,0) {$\hat\Xi$};
  
        \foreach \i in {0,1,2} {
          \pgfmathsetmacro{\dy}{-\i*\extsep}
          \squaremarker{0}{\dy}
        }
        \node[below=5pt,blue] at (0,-0.05) {$\hat\xi_1 = \hat\xi_2 = \hat\xi_3$};
  
        \foreach \i in {1,...,4} {
          \pgfmathsetmacro{\x}{\i*\h}
          \pgfmathtruncatemacro{\lab}{\p + \i + 1}
          \squaremarker{\x}{0}
          \node[above=1pt,blue] at (\x,0) {$\hat\xi_{\lab}$};
        }
  
        \foreach \i in {0,1,2} {
          \pgfmathsetmacro{\dy}{-\i*\extsep}
          \squaremarker{1}{\dy}
        }
        \node[below=5pt,blue] at (1,-0.05) {$\hat\xi_8 = \hat\xi_9 = \hat\xi_{10}$};
  
        \foreach \i in {2,...,5} {
          \pgfmathsetmacro{\x}{(\i-1)*\h}
          \draw[dashed,gray] (\x,0) -- (\x,-\sep);
        }
      \end{scope}
  
      \begin{scope}[yshift=-\sep cm]
        \draw[thick] (0,0) -- (1,0);
        \node[below=20pt,font=\large] at (0.5,0) {$\Xi$};
  
        \foreach \i in {0,1,2} {
          \pgfmathsetmacro{\dy}{\i*\extsep}
          \circlemarker{0}{\dy}
        }
        \node[above=5pt,red] at (0,0.05) {$\xi_1 = \xi_2 = \xi_3$};
  
        \foreach \j in {1,...,9} {
          \pgfmathsetmacro{\x}{\j*\hf}
          \pgfmathtruncatemacro{\lab}{\p + \j + 1}
          \circlemarker{\x}{0}
          \node[below=1pt,red] at (\x,0) {$\xi_{\lab}$};
        }
  
        \foreach \i in {0,1,2} {
          \pgfmathsetmacro{\dy}{\i*\extsep}
          \circlemarker{1}{\dy}
        }
        \node[above=5pt,red] at (1.05,0.05) {$\xi_{13} = \xi_{14} = \xi_{15}$};
  
        \foreach \i in {2,...,5} {
          \pgfmathsetmacro{\x}{(\i-1)*\h}
          \draw[dashed,gray] (\x,\sep) -- (\x,0);
        }
      \end{scope}
  
    \end{scope}
  
  \end{tikzpicture}
  \caption{Uniform partition $\hat Z$ with $\hat N = 6$ elements and its dyadic refinement $Z$.
The associated open knot vectors $\hat\Xi$ and $\Xi$ correspond to polynomial degree $p = 2$,
with multiplicity $p+1 = 3$ at the endpoints of the interval.}\label{F:partitions}
\end{figure}

Refining a knot vector by midpoint insertion provides a way to increase the degrees of freedom (DOFs) of a spline function without altering its shape. This refinement enables more detailed variations to be captured by modifying its B-spline coefficients. 

In general, the basis functions in \(\BB\) are related to those in \(\hat{\BB}\) through a linear transformation:
\begin{equation*}
	\hat\beta_{j}(x) = \sum_{i=1}^{n} A_{ij} {\beta}_{i}(x), \qquad j =1, \ldots, \hat n.   
\end{equation*}
This formula establishes the classical two-scale relation between coarse and fine B-splines, where the coefficients \(A_{ij}\) form the so-called \emph{subdivision matrix} \(A\in \mathbb{R}^{n \times \hat{n}}\). In compact form,
\begin{equation}\label{E:subdivision matrix basis functions} \hat{\boldsymbol{\beta}}(x) = A^{T} \, \boldsymbol{\beta}(x), \end{equation}
with
\[ \boldsymbol{\beta}(x) := 
\begin{bmatrix}
	\beta_1(x) \\
	\beta_2(x) \\
	\vdots \\
	\beta_n(x)
\end{bmatrix},
\qquad
\hat{\boldsymbol{\beta}}(x) := 
\begin{bmatrix}
	\hat{\beta}_1(x) \\
	\hat{\beta}_2(x) \\
	\vdots \\
	\hat{\beta}_{\hat{n}}(x)
\end{bmatrix}.\] 

In particular, each coarse B-spline \( \hat{\beta}_j \) can be expressed as a simple linear combination of fine B-splines (cf. Figure~\ref{F:two_scale}):  
\[ \hat{\beta}_j = \sum_{i=1}^{p+2} \eta_{i,p} \beta_{2j-p-2+i}, \qquad p+1 \leq j \leq \hat{n}-p. \]
Here, the indices in the sum are chosen so that the support of \( \hat{\beta}_j \), which spans \( p+1 \) coarse knot intervals, coincides exactly with the support of \( p+2 \) consecutive fine B-splines. The coefficient vector \(\boldsymbol{\eta}_p:=(\eta_{i,p})\) is given by  
\[ \eta_{i,p} = 2^{-p} \binom{p+1}{i-1}, \quad 1 \leq i \leq p+2. \]

\begin{figure}[htbp]
    \centering
    \includegraphics[width=\linewidth]{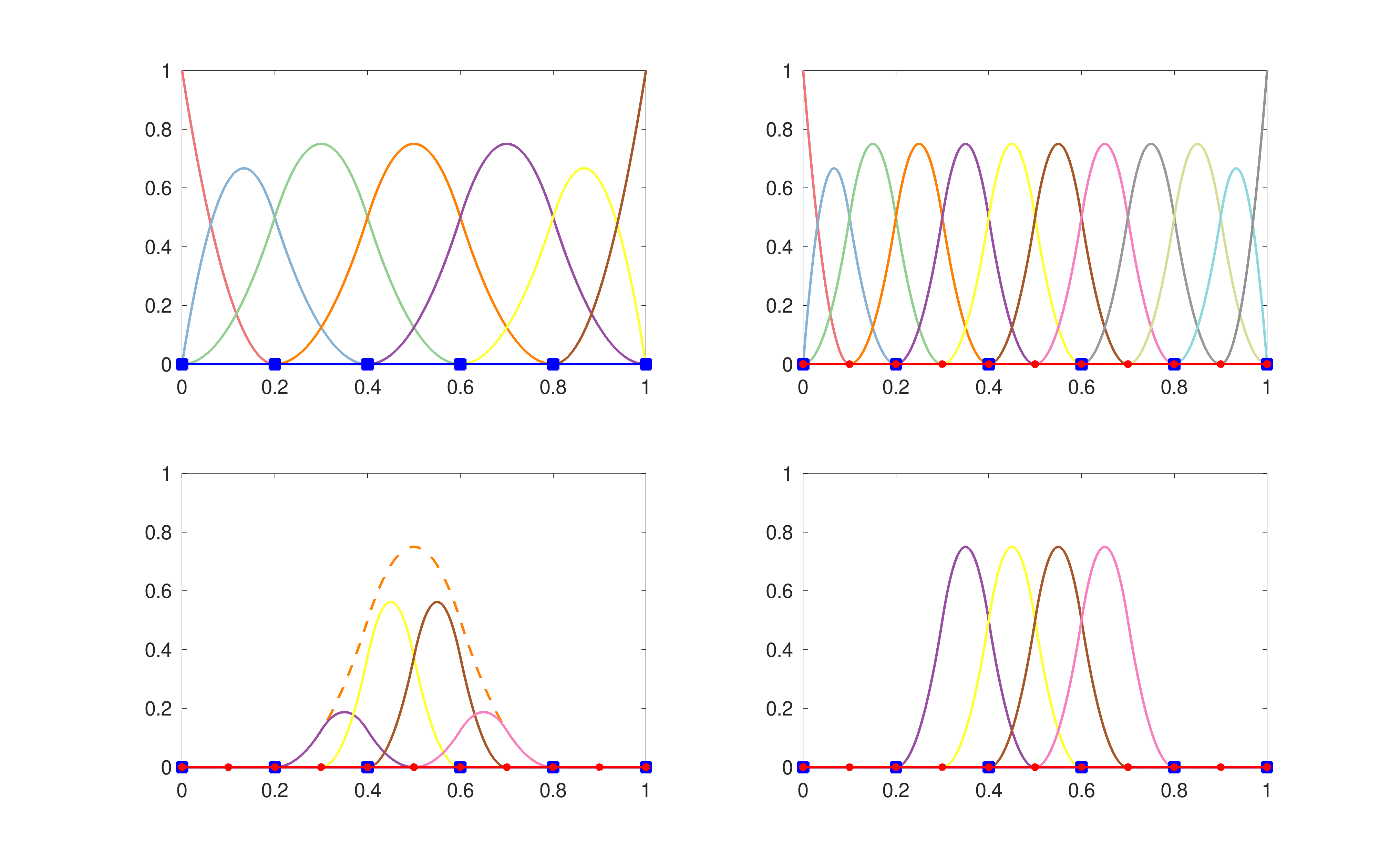}
    \caption{Coarse and fine quadratic B-spline bases are shown at the top left and top right, respectively. At the bottom left, a coarse B-spline basis function (in dashed line) can be expressed as a linear combination of the fine B-spline basis functions shown at the bottom right.
    }\label{F:two_scale}
\end{figure}

Moreover, any spline $\hat{s} \in \hat{\SSS}$ can be written as a linear combination of coarse basis functions:
\begin{equation*}
	\hat{s} = \sum_{i=1}^{\hat{n}} \hat{c}_i \hat{\beta}_i={\bf \hat{c}}^T \hat{\boldsymbol{\beta}},
\end{equation*}
with ${\bf \hat{c}} = (\hat{c}_1, \dots, \hat{c}_{\hat{n}})^T \in \mathbb{R}^{\hat{n}}$. Since $\hat{\SSS} \subset \SSS$, the same function can also be expressed in terms of the fine basis:
\begin{equation*}
	\hat{s} = \sum_{i=1}^{n} c_i \beta_i={\bf {c}}^T {\boldsymbol{\beta}},
\end{equation*}
with ${\bf c} = (c_1, \dots, c_n)^T \in \mathbb{R}^n$. 

Now, taking into account~\eqref{E:subdivision matrix basis functions}, the fine B-spline coefficients are obtained from the coarse ones using the subdivision matrix:
\begin{equation} \label{knot insertion 1D}
	{\bf c} = A {\bf \hat{c}}.    
\end{equation}
Figure~\ref{F:knot insertion matrix} illustrates several examples of subdivision matrices for different values of $p$.

\begin{figure}[htbp]
  \scriptsize
\[
A_1 =\renewcommand{\arraystretch}{1.3}
\begin{bmatrix}
1 & & & & & & \\
\frac{1}{2} & \frac{1}{2} & & & & & \\
  & 1 & & & & & \\
  & \frac{1}{2} & \frac{1}{2} & & & & \\
  &  & 1 & & & & \\
  &  & \frac{1}{2} & & & & \\
  &  &  & \ddots & & & \\
  &  &  & & \frac{1}{2} & & \\
  &  &  &  & 1 & & \\
  &  &  &  & \frac{1}{2} & \frac{1}{2} & \\
  &  &  &  &  & 1 & \\
  &  &  &  &  & \frac{1}{2} & \frac{1}{2} \\
  &  &  &  &  &  & 1
\end{bmatrix}
\qquad\qquad\qquad
A_3= \left[\renewcommand{\arraystretch}{1.5}
\begin{array}{*{11}{c}}
1 & & & & & & & & & & \\
\frac{1}{2} & \frac{1}{2} & & & & & & & & & \\
& \frac{3}{4} & \frac{1}{4} & & & & & & & & \\
& \frac{3}{16} & \frac{11}{16} & \frac{1}{8} & & & & & & & \\
& & \frac{1}{2} & \frac{1}{2} & & & & & & & \\
& & \frac{1}{8} & \frac{3}{4} & \frac{1}{8} & & & & & & \\
& & & \frac{1}{2} & \frac{1}{2} & & & & & & \\
& & & \frac{1}{8} & \frac{3}{4} & & & & & & \\
& & & & \frac{1}{2} & & & & & & \\
& & & & \frac{1}{8} & & & & & & \\
& & & &  & \ddots& & & & & \\
& & & & & & \frac{1}{8} & & & & \\
& & & & & & \frac{1}{2} & & & & \\
& & & & & & \frac{3}{4} & \frac{1}{8} & & & \\
& & & & & & \frac{1}{2} & \frac{1}{2} & & & \\
& & & & & & \frac{1}{8} & \frac{3}{4} & \frac{1}{8} & & \\
& & & & & & & \frac{1}{2} & \frac{1}{2} & & \\
& & & & & & & \frac{1}{8} & \frac{11}{16} & \frac{3}{16} & \\
& & & & & & & & \frac{1}{4} & \frac{3}{4} & \\
& & & & & & & & & \frac{1}{2} & \frac{1}{2} \\
& & & & & & & & & & 1
\end{array}\right]
\]
\[
A_2 =\renewcommand{\arraystretch}{1.5}
\begin{bmatrix}
1 &  &  &  &  &  &  &  &  \\
\frac{1}{2} & \frac{1}{2} &  &  &  &  &  &  &  \\
  & \frac{3}{4} & \frac{1}{4} &  &  &  &  &  &  \\
  & \frac{1}{4} & \frac{3}{4} &  &  &  &  &  &  \\
  &  & \frac{3}{4} & \frac{1}{4} &  &  &  &  &  \\
  &  & \frac{1}{4} & \frac{3}{4} &  &  &  &  &  \\
  &  &  & \frac{3}{4} &  &  &  &  &  \\
  &  &  & \frac{1}{4} &  &  &  &  &  \\
  &  &  &  & \ddots &  &  &  &  \\
  &  &  &  &  & \frac{1}{4} &  &  &  \\
  &  &  &  &  & \frac{3}{4} &  &  &  \\
  &  &  &  &  & \frac{3}{4} & \frac{1}{4} &  &  \\
  &  &  &  &  & \frac{1}{4} & \frac{3}{4} &  &  \\
  &  &  &  &  &  & \frac{3}{4} & \frac{1}{4} &  \\
  &  &  &  &  &  & \frac{1}{4} & \frac{3}{4} &  \\
  &  &  &  &  &  &  & \frac{1}{2} & \frac{1}{2} \\
  &  &  &  &  &  &  &  & 1
\end{bmatrix}
\quad
A_4=\left[\renewcommand{\arraystretch}{1.5}
\begin{array}{*{13}{c}}
1 & & & & & & & & & & & & \\
\frac{1}{2} & \frac{1}{2} & & & & & & & & & & & \\
& \frac{3}{4} & \frac{1}{4} & & & & & & & & & & \\
& \frac{3}{16} & \frac{11}{16} & \frac{1}{8} & & & & & & & & & \\
& & \frac{5}{12} & \frac{25}{48} & \frac{1}{16} & & & & & & & & \\
& & \frac{1}{12} & \frac{29}{48} & \frac{5}{16} & & & & & & & & \\
& & & \frac{5}{16} & \frac{5}{8} & \frac{1}{16} & & & & & & & \\
& & & \frac{1}{16} & \frac{5}{8} & \frac{5}{16} & & & & & & & \\
& & & & \frac{5}{16} & \frac{5}{8} & & & & & & & \\
& & & & \frac{1}{16} & \frac{5}{8} & & & & & & & \\
& & & & & \frac{5}{16} & & & & & & & \\
& & & & & \frac{1}{16} & & & & & & & \\
& & & & & & \ddots & & & & & & \\
& & & & & & & \frac{1}{16} & & & & & \\
& & & & & & & \frac{5}{16} & & & & & \\
& & & & & & & \frac{5}{8} & \frac{1}{16} & & & & \\
& & & & & & & \frac{5}{8} & \frac{5}{16} & & & & \\
& & & & & &  & \frac{5}{16} & \frac{5}{8} & \frac{1}{16} & & & \\
& & & & & & & \frac{1}{16} & \frac{5}{8} & \frac{5}{16} & & & \\
& & & & & & & & \frac{5}{16} & \frac{29}{48} & \frac{1}{12} & & \\
& & & & & & & & \frac{1}{16} & \frac{25}{48} & \frac{5}{12} & & \\
& & & & & & & & & \frac{1}{8} & \frac{11}{16} & \frac{3}{16} & \\
& & & & & & & & & & \frac{1}{4} & \frac{3}{4} & \\
& & & & & & & & & & & \frac{1}{2} & \frac{1}{2} \\
& & & & & & & & & & & & 1
\end{array}\right]
\]

\caption{We present the subdivision matrices $A_p$ for $p = 1, 2, 3, 4$. Only the nonzero entries are displayed. Note that, except for the first and last $p$ columns, the columns of $A_p$ are generated by the vector $\boldsymbol{\eta}_p \in \mathbb{R}^{p+2}$, which slides downward with a stride of 2 from one column to the next.}\label{F:knot insertion matrix}
\end{figure}

\section{Left inverses for subdivision matrix for the univariate case}\label{sec:left_inverses}

In spline-based function approximation, it is often necessary to modify the resolution at which splines are represented. Given a (fine) spline space $\SSS$, each spline function $s \in \SSS$ is associated with a coefficient vector ${\bf c}$, which corresponds to its representation in the B-spline basis of $\SSS$. Our objective is to construct an \emph{efficient} method to approximate $s$ in a coarser spline space $\hat{\SSS} \subset \SSS$ by computing the new set of B-spline coefficients ${\bf \hat{c}}$ directly from the original coefficients ${\bf c}$.

As discussed in Section~\ref{sec:b_splines_and_subdivision_matrices}, when refining a spline space, there exists a subdivision matrix $A$ that maps the coarse B-spline coefficients to the fine ones, i.e., $A {\bf \hat{c}} = {\bf c}$. In this context, the problem can be reformulated as follows: find a matrix $B$ such that ${\bf \hat{c}} = B {\bf c}$, for a given ${\bf c}$. In other words, $B$ acts as a \emph{left inverse} of~$A$.

\subsection{An efficient construction of left inverses through a local least-squares method}\label{sec:construction_of_left_inverses}

Several approaches can be employed to obtain a left inverse $B$ of the matrix $A$. 
In this work, we follow the strategy introduced in~\cite{BGS2006-local-least-squares}, where each row of \( B \) is computed by solving a local least-squares problem.
In our setting, the matrix \( A \in \mathbb{R}^{n \times \hat{n}} \) is a subdivision matrix that satisfies~\eqref{knot insertion 1D}. 
The particular structure of subdivision matrices enables a simplified application of the method by Bartels et al., reducing the global least-squares problem to just a few (specifically, three) small local cases, as will be detailed later.

Roughly speaking, the algorithm in~\cite{BGS2006-local-least-squares} defines, for each \( j = 1, \dots, \hat{n} \), the \( j \)-th row of \( B \) as a particular row of the pseudoinverse of a submatrix \( A_j \) of \( A \), namely \( (A_j^T A_j)^{-1} A_j^T \), padded with zeros in the appropriate positions to reach length \( n \).  
Each submatrix \( A_j \) is constructed according to the following procedure.

\begin{procedure}\label{proce:bartels} 
	Select the \( j \)-th column of \( A \), then
	\begin{enumerate}
		\item choose a range of rows from \( A \) that includes at least one nonzero entry in the \( j\)-th column, 
		\item discard any zero columns from the resulting submatrix, and
		\item if necessary, readjust the selected rows and the corresponding nonzero columns so that
		\begin{equation}\label{cond:submatrix-selection} 
			A_j \text{ is a full-rank, square or overdetermined submatrix of } A.
		\end{equation}
	\end{enumerate}
\end{procedure}

As mentioned earlier, by exploiting the banded structure of the subdivision matrix \( A \) and the fact that most of its columns are shifted versions of a vector \( \boldsymbol{\eta} \) (see Figure~\ref{fig:estructura_matriz_A}), it is sufficient to consider only three distinct matrices, namely \( A_{\mathrm{in}} \), \( A_{\mathrm{tl}} \), and \( A_{\mathrm{br}} \), to cover all indices \( j = 1, \dots, \hat{n} \) when defining the submatrices \( A_j \) in the algorithm of Bartels et al.

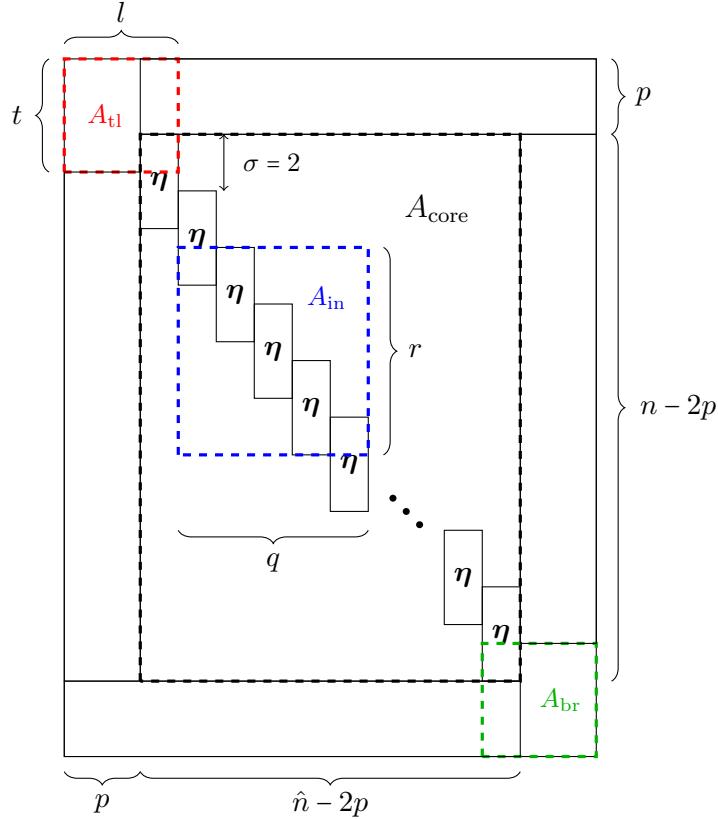
\begin{figure}[htbp]
  \centering
    \begin{tikzpicture} \label{grafico de A}
        \def\nrow{1.5}       
        \def\nlocal{2.5}     
        \def\ncols{6}        
        \def\nblock{4}       
  
        \draw (0,-1) rectangle (7,8.25);

 		\draw (1,0) rectangle (6,7.25) node[midway,xshift=40pt,yshift=75pt] {$A_{\text{core}}$};
 		 \draw [draw=black, very thick, dashed] (1,0) rectangle (6,7.25);
        \draw (1,6) rectangle (1.5,7.25) node[midway] {\small $\boldsymbol{\eta}$ };
        \draw (1.5,5.25) rectangle (2,6.5) node[midway] {\small $\boldsymbol{\eta}$ };
        \draw (2,4.5) rectangle (2.5,5.75) node[midway] {\small $\boldsymbol{\eta}$ };
        \draw (2.5,3.75) rectangle (3,5) node[midway] {\small $\boldsymbol{\eta}$ };
		\draw (3,3) rectangle (3.5,4.25) node[midway] {\small $\boldsymbol{\eta}$ };
 		\draw (3.5,2.25) rectangle (4,3.5) node[midway] {\small $\boldsymbol{\eta}$ };
         \node at (4.5,2.25) {\Huge $\ddots$};
        \draw (5,0.75) rectangle (5.5,2) node[midway] {\small $\boldsymbol{\eta}$ };
        \draw (5.5,0) rectangle (6,1.25) node[midway] {\small $\boldsymbol{\eta}$ };
        
        \draw [draw=red, very thick, dashed] (0,6.75) rectangle (1.5,8.25) node[midway,xshift=-6pt] {\footnotesize $\textcolor{red}{A_{\text{tl}}}$};
        
        \draw[blue, very thick, dashed] (1.5,3) rectangle (4,5.75) node[midway,xshift=20pt,yshift=20pt] {\footnotesize $\textcolor{blue}{A_{\text{in}}}$};
        
        \draw [draw=green!70!black, very thick, dashed] (5.5,-1) rectangle (7,0.5) node[midway,xshift=8pt] {\footnotesize $\textcolor{green!70!black}{A_{\text{br}}}$};
        
        \draw (1,7.25) rectangle (7,8.25) node[midway] {\small };
        \draw (0,0) rectangle (1,6.75) node[midway] {\small };
        \draw (6,0.5) rectangle (7,7.25) node[midway] {\small };
        \draw (0,-1) rectangle (6,0) node[midway] {\small };
        
        \draw[decorate,decoration={brace,amplitude=5pt,mirror}] (1.5,2) -- (4,2) node[midway,yshift=-12pt] {\small $q$};
         \draw[decorate,decoration={brace,amplitude=5pt,mirror}] (4.2,3) -- (4.2,5.75) node[midway,xshift=12pt] {\small $r$};
        \draw[decorate,decoration={brace,amplitude=5pt,mirror}] (7.2,0) -- (7.2,7.25) node[midway,xshift=25pt] {\small $n-2p$};
        \draw[decorate,decoration={brace,amplitude=5pt,mirror}] (1,-1.2) -- (6,-1.2) node[midway,yshift=-12pt] {\small $\hat{n}-2p$};
        \draw[decorate,decoration={brace,amplitude=5pt}] (-0.2,6.75) -- (-0.2,8.25) node[midway,xshift=-12pt] {\small $t$};
        \draw[decorate,decoration={brace,amplitude=5pt}] (0,8.45) -- (1.5,8.45) node[midway,yshift=12pt] {\small $l$};
        \draw[decorate,decoration={brace,amplitude=5pt,mirror}] (0,-1.2) -- (1,-1.2) node[midway,yshift=-12pt] {\small $p$};
        \draw[decorate,decoration={brace,amplitude=5pt,mirror}] (7.2,7.25) -- (7.2,8.25) node[midway,xshift=12pt] {\small $p$};
        \draw[<->] (2.1,6.5) -- (2.1,7.25) node[midway,xshift=18pt] {\footnotesize $\sigma =2$};
    \end{tikzpicture}
    \caption{Structure of a subdivision matrix $A$ with relevant blocks.}
      \label{fig:estructura_matriz_A}
  \end{figure}

We propose to construct a left inverse \( B \) of the matrix \( A \) as follows. 
For each polynomial degree \( p \), we begin by fixing an index \( k \in \mathbb{N} \) with \( 0 \leq k \leq p+2 \). Based on \( p \) and the chosen value of \( k \), we define the parameter \( r \) by
\begin{equation}\label{eq:r locality width}
r := p+2+2k,
\end{equation}
which plays a central role, and we refer to it as the \emph{locality width}. Particularly, $r$ is the number of entries that are allowed to be nonzero in each interior row of the matrix $B$.

In Figure~\ref{fig:estructura_matriz_A}, the central block of the matrix \( A \), denoted \( A_{\text{core}} \), is a band matrix of size \( (n-2p) \times (\hat{n}-2p) \) with a shift structure. Specifically, each column of \( A_{\text{core}} \) has the same nonzero entries, represented by a vector \( \boldsymbol{\eta} \), but shifted two rows downward (\( \sigma = 2 \)) relative to the preceding column. 
As a starting point, we assign the same matrix \( A_{\mathrm{in}} \) to all interior columns of \( A \) whenever possible. 
Explicitly, for \( \ell+1 \le j \le \hat{n}-\ell \), we set \( A_j = A_{\mathrm{in}} \), where \( A_{\mathrm{in}} \in \mathbb{R}^{r \times q} \) consists of \( r \) rows, with its central column given by \( \boldsymbol{\eta} \) centered at that position (see examples in Figure~\ref{F:matrices locales}). 
The number of columns \( q \) depends on the choice of \( r \) and the adjustments required by Procedure~\ref{proce:bartels} to satisfy condition~\eqref{cond:submatrix-selection}. 
According to~\eqref{eq:r locality width} we have that \( r = p + 2 + 2k \), and then
\begin{equation}\label{eq:q number of columns of Ain}
q := \begin{cases}
	(p+2) + 2\left\lfloor \tfrac{k}{2} \right\rfloor, & \text{if $p$ is odd}, \\
	(p+1) + 2\left\lfloor \tfrac{k+1}{2} \right\rfloor, & \text{if $p$ is even}.
\end{cases}
\end{equation}

\begin{figure}[htbp]
  \centering

  \begin{subfigure}[b]{0.45\textwidth}
      \[
      A_{\mathrm{in}} =
      \begin{bmatrix}
      \frac{3}{4} & \frac{1}{4} &  \\
      \frac{1}{4} & \frac{3}{4} &  \\
       & \frac{3}{4} & \frac{1}{4} \\
       & \frac{1}{4} & \frac{3}{4}
      \end{bmatrix}
      \]
      \caption{Matrix $A_{\mathrm{in}}$ for $p = 2$, locality width 4.}
  \end{subfigure}
  \hfill
  \begin{subfigure}[b]{0.45\textwidth}
      \[
      A_{\mathrm{in}} =
      \begin{bmatrix}
      \frac{1}{4} & \frac{3}{4} &  &  &  \\
       & \frac{3}{4} & \frac{1}{4} &  &  \\
       & \frac{1}{4} & \frac{3}{4} &  &  \\
       &  & \frac{3}{4} & \frac{1}{4} &  \\
       &  & \frac{1}{4} & \frac{3}{4} &  \\
       &  &  & \frac{3}{4} & \frac{1}{4}
      \end{bmatrix}
      \]
      \caption{Matrix $A_{\mathrm{in}}$ for $p = 2$, locality width 6.}
  \end{subfigure}

  \vspace{1em}

  \begin{subfigure}[b]{0.45\textwidth}
      \[
      A_{\mathrm{in}} =
      \begin{bmatrix}
      \frac{1}{8} & \frac{3}{4} & \frac{1}{8} &  &  &  &  \\
       & \frac{1}{2} & \frac{1}{2} &  &  &  &  \\
       & \frac{1}{8} & \frac{3}{4} & \frac{1}{8} &  &  &  \\
       &  & \frac{1}{2} & \frac{1}{2} &  &  &  \\
       &  & \frac{1}{8} & \frac{3}{4} & \frac{1}{8} &  &  \\
       &  &  & \frac{1}{2} & \frac{1}{2} &  &  \\
       &  &  & \frac{1}{8} & \frac{3}{4} & \frac{1}{8} &  \\
       &  &  &  & \frac{1}{2} & \frac{1}{2} &  \\
       &  &  &  & \frac{1}{8} & \frac{3}{4} & \frac{1}{8}
      \end{bmatrix}
      \]
      \caption{Matrix $A_{\mathrm{in}}$ for $p = 3$, locality width 9.}
  \end{subfigure}
  \hfill
  \begin{subfigure}[b]{0.45\textwidth}
      \[
      A_{\mathrm{in}} =
      \begin{bmatrix}
      \frac{1}{2} & \frac{1}{2} &  &  &  &  &  \\
      \frac{1}{8} & \frac{3}{4} & \frac{1}{8} &  &  &  &  \\
       & \frac{1}{2} & \frac{1}{2} &  &  &  &  \\
       & \frac{1}{8} & \frac{3}{4} & \frac{1}{8} &  &  &  \\
       &  & \frac{1}{2} & \frac{1}{2} &  &  &  \\
       &  & \frac{1}{8} & \frac{3}{4} & \frac{1}{8} &  &  \\
       &  &  & \frac{1}{2} & \frac{1}{2} &  &  \\
       &  &  & \frac{1}{8} & \frac{3}{4} & \frac{1}{8} &  \\
       &  &  &  & \frac{1}{2} & \frac{1}{2} &  \\
       &  &  &  & \frac{1}{8} & \frac{3}{4} & \frac{1}{8} \\
       &  &  &  &  & \frac{1}{2} & \frac{1}{2}
      \end{bmatrix}
      \]
      \caption{Matrix $A_{\mathrm{in}}$ for $p = 3$, locality width 11.}
  \end{subfigure}

  \caption{Some examples of submatrices $A_{\text{in}}$ of the subdivision matrix $A$, corresponding to polynomial degrees $p = 2$ and $p = 3$.}\label{F:matrices locales}
\end{figure}

For the remaining columns that cannot be represented by $A_{\mathrm{in}}$, we introduce two additional submatrices of $A$. 
The first one, $A_{\mathrm{tl}} \in \mathbb{R}^{t\times l}$, occupies the top-left corner of $A$ and is used for $1 \le j \le \ell$. 
The second one, $A_{\mathrm{br}} \in \mathbb{R}^{t\times l}$, occupies the bottom-right corner of $A$ and is used for $\hat{n}-\ell+1 \le j \le \hat{n}$. 
The dimensions $t$ and $l$ are chosen so that both $A_{\mathrm{tl}}$ and $A_{\mathrm{br}}$ are consistent with Procedure~\ref{proce:bartels} and satisfy condition~\eqref{cond:submatrix-selection}. The selection of these parameters is constrained but not unique, as will be discussed later (see Remark~\ref{rem:parameters}). An illustration of the three submatrices $A_{\mathrm{tl}}, A_{\mathrm{in}},$ and $A_{\mathrm{br}}$ is given in Figure~\ref{fig:example_submatrices_of_A}.

\begin{figure}[htbp]
    \[
   A_{\mathrm{tl}} =  \begin{bmatrix}
   	   	1 				   & 		 			& 					\\
   	   	\frac{1}{2}  & \frac{1}{2}   & 					 \\
   						   & \frac{3}{4}  & \frac{1}{4}  \\
   		 			 	   & \frac{1}{4}   & \frac{3}{4} 
\end{bmatrix}\quad 
A_{\mathrm{in}} =  \begin{bmatrix}
	\frac{3}{4}  & \frac{1}{4}  &					\\
	\frac{1}{4}	 &  \frac{3}{4} &					 \\
					    & \frac{3}{4} & \frac{1}{4}   \\
						& \frac{1}{4} &  \frac{3}{4}
\end{bmatrix}\quad 
A_{\mathrm{br}} =  \begin{bmatrix}
\frac{3}{4}  & \frac{1}{4} & 				 \\
\frac{1}{4}  & \frac{3}{4} & 			 	  \\
& \frac{1}{2} & \frac{1}{2} \\
&  					&   1
\end{bmatrix}
    \]
    \caption{Main blocks of the matrix $A$ for polynomial degree $p=2$ corresponding to the parameters $r = 4$, $q = 3$, $t = 4$, $l=3$, $\ell = 2$ and $z = 2$. Submatrices $A_{\mathrm{tl}}$,  $A_{\mathrm{in}}$ and $A_{\mathrm{br}}$.}
      \label{fig:example_submatrices_of_A}
  \end{figure}

In summary, the submatrices $A_j$ of $A$ are defined as follows:
\[
A_j := \begin{cases}
	A_{\mathrm{tl}}, & 1 \le j \le \ell, \\
	A_{\mathrm{in}}, & \ell+1 \le j \le \hat{n}-\ell, \\
	A_{\mathrm{br}}, & \hat{n}-\ell+1 \le j \le \hat{n}.
\end{cases}
\]

We now turn to the construction of the matrix $B$. 
The process is carried out block by block, as illustrated in Figure~\ref{fig:estructura_matriz_B}.
The two corner blocks of the matrix \( B \in \mathbb{R}^{\hat{n} \times n} \), denoted by \(B_{\mathrm{tl}}\) and  \(B_{\mathrm{br}}\) in Figure~\ref{fig:estructura_matriz_B}, are obtained from 
the Moore--Penrose pseudoinverses of \( A_{\mathrm{tl}} \) and \( A_{\mathrm{br}} \), namely
\[
\tilde{B}_{\mathrm{tl}} := (A_{\mathrm{tl}}^T A_{\mathrm{tl}})^{-1} A_{\mathrm{tl}}^T, 
\quad 
\tilde{B}_{\mathrm{br}} := (A_{\mathrm{br}}^T A_{\mathrm{br}})^{-1} A_{\mathrm{br}}^T.
\]
We then define \( B_{\mathrm{tl}} \) as the matrix composed of the first \(\ell\) rows of \( \tilde{B}_{\mathrm{tl}} \), whereas \( B_{\mathrm{br}} \) is defined by the last \(\ell\) rows of \( \tilde{B}_{\mathrm{br}} \).

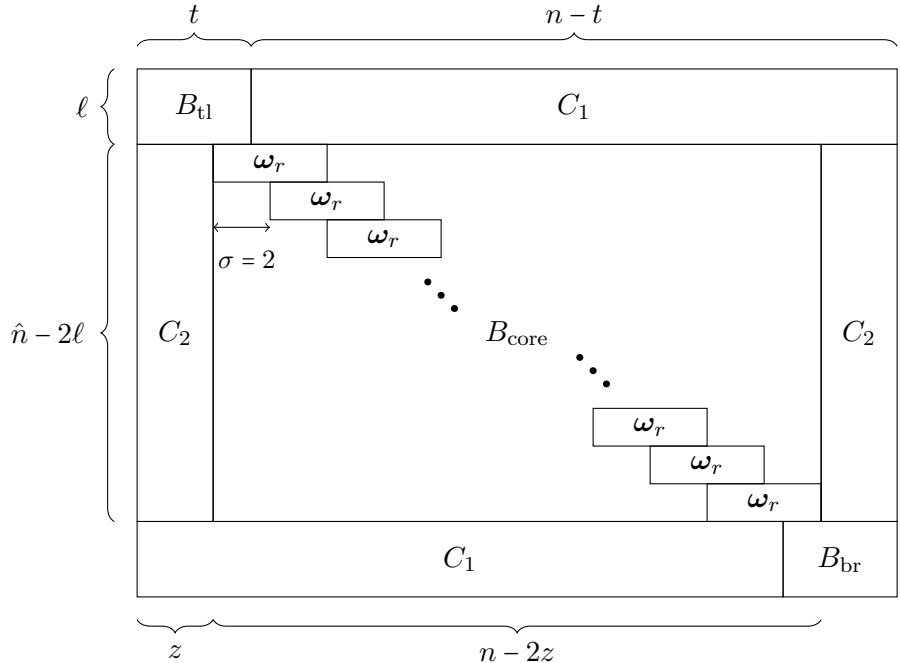
\begin{figure}[htbp]
\centering
  \begin{tikzpicture} 
      \def\nrow{1.5}       
      \def\nlocal{2.5}     
      \def\ncols{6}        
      \def\nblock{4}       


      \draw (1,4) rectangle (2.5,4.5) node[midway] {\small $\boldsymbol{\omega}_{r}$ };
      \draw (1.75,3.5) rectangle (3.25,4) node[midway] {\small $\boldsymbol{\omega}_{r}$ };
      \draw (2.5,3) rectangle (4,3.5) node[midway] {\small $\boldsymbol{\omega}_{r}$ };
      \draw[<->] (1,3.4) -- (1.75,3.4) node[midway,yshift=-12pt, xshift=2pt] {\footnotesize $\sigma =2$};
       \node at (4,2.5) {\Huge $\ddots$};
       \node at (6,1.5) {\Huge $\ddots$};
      \draw (6,0.5) rectangle (7.5,1) node[midway] {\small $\boldsymbol{\omega}_{r}$ };
      \draw (6.75,0) rectangle (8.25,0.5) node[midway] {\small $\boldsymbol{\omega}_{r}$ };
      \draw (7.5,-0.5) rectangle (9,0) node[midway] {\small $\boldsymbol{\omega}_{r}$ };

      \draw (0,4.5) rectangle (1.5,5.5) node[midway] {\small $B_{\text{tl}}$};
      \draw (1.5,4.5) rectangle (10,5.5) node[midway] {\small $C_1$};
      
      \draw (0,-0.5) rectangle (1,4.5) node[midway] {\small $C_2$};
      \draw (9,-0.5) rectangle (10,4.5) node[midway] {\small $C_2$};
      \draw (1,-0.5) rectangle (9,4.5) node[midway] {\small $B_{\text{core}}$};

      \draw (0,-1.5) rectangle (8.5,-0.5) node[midway] {\small $C_1$};
      \draw (8.5,-1.5) rectangle (10,-0.5) node[midway] {\small $B_{\text{br}}$};
      \draw[decorate,decoration={brace,amplitude=5pt}] (-0.3,-0.5) -- (-0.3,4.5) node[midway,xshift=-25pt] {\small $\hat{n}-2\ell$};
      \draw[decorate,decoration={brace,amplitude=5pt}] (-0.3,4.5) -- (-0.3,5.5) node[midway,xshift=-12pt] {\small $\ell$};

      \draw[decorate,decoration={brace,amplitude=5pt}] (0,5.8) -- (1.5,5.8) node[midway,yshift=12pt] {\small $t$};
      \draw[decorate,decoration={brace,amplitude=5pt}] (1.5,5.8) -- (10,5.8) node[midway,yshift=12pt] {\small $n-t$};

      \draw[decorate,decoration={brace,amplitude=5pt,mirror}] (0,-1.8) -- (1,-1.8) node[midway,yshift=-12pt] {\small $z$};
      \draw[decorate,decoration={brace,amplitude=5pt,mirror}] (1,-1.8) -- (9,-1.8) node[midway,yshift=-12pt] {\small $n-2z$};
  \end{tikzpicture}
  \caption{Structure of the proposed left inverse $B$ of the subdivision matrix with relevant blocks.}
      \label{fig:estructura_matriz_B}
\end{figure}

In addition, the central block, denoted $B_{\mathrm{core}}$, is a band matrix of size $(\hat{n}-2\ell) \times (n-2z)$ 
with a shift structure. Each row of $B_{\mathrm{core}}$ contains the same nonzero entries, given by a vector $\boldsymbol{\omega}_{r}$, 
shifted two columns to the right ($\sigma = 2$) relative to the preceding row. 
The vector $\boldsymbol{\omega}_{r}$ is extracted from the central row of
\begin{equation}\label{eq:omega_r_definition}
	B_{\mathrm{in}} = (A_{\mathrm{in}}^T A_{\mathrm{in}})^{-1} A_{\mathrm{in}}^T,
\end{equation}
which is well defined because the number of rows \( q \) is odd (cf.~\eqref{eq:q number of columns of Ain}).

Finally, zero matrices are added to complete the global structure, namely 
$C_1 \in \mathbb{R}^{\ell \times (n-t)}$ and 
$C_2 \in \mathbb{R}^{(\hat{n}-2\ell) \times z}$. It is worth noting that \(z\) represents the number of zero entries that need to be inserted in the \((\ell+1)\)-th row of \(B\) preceding the vector \(\boldsymbol{\omega}_r\). The resulting arrangement is depicted in Figure~\ref{fig:estructura_matriz_B}, 
while specific examples of $B_{\mathrm{tl}}, B_{\mathrm{br}}$, and the vector $\boldsymbol{\omega}_{r}$ 
(the nonzero part of the rows of $B_{\text{core}}$) are shown in Figure~\ref{fig:example_blocks_of_B}.

\begin{figure}[htbp]
 \centering
\[
 B_{\mathrm{tl}} =
 \begin{bmatrix}
 \tfrac{121}{141} & \tfrac{40}{141} & \tfrac{-9}{47} & \tfrac{1}{141} & \tfrac{3}{47} & \tfrac{-1}{47} \\
 \tfrac{-41}{141} & \tfrac{82}{141} & \tfrac{45}{47} & \tfrac{-5}{141} & \tfrac{-15}{141} & \tfrac{5}{47} \\
 \tfrac{5}{47} & \tfrac{-10}{47} & \tfrac{-5}{47} & \tfrac{35}{47} & \tfrac{33}{47} & \tfrac{-11}{47}
 \end{bmatrix}\quad
 B_{\mathrm{br}} =
 \begin{bmatrix}
 \tfrac{-11}{47} & \tfrac{33}{47} & \tfrac{35}{47} & \tfrac{-5}{47} & \tfrac{-10}{47} & \tfrac{5}{47} \\
 \tfrac{5}{47} & \tfrac{-15}{141} & \tfrac{-5}{141} & \tfrac{45}{47} & \tfrac{82}{141} & \tfrac{-41}{141} \\
 \tfrac{-1}{47} & \tfrac{3}{47} & \tfrac{1}{141} & \tfrac{-9}{47} & \tfrac{40}{141} & \tfrac{121}{141}
 \end{bmatrix}\]
\[
\boldsymbol{\omega}_{r} = \boldsymbol{\omega}_{6} =
\begin{bmatrix}
	0  & -\tfrac{1}{4} & \tfrac{3}{4} & \tfrac{3}{4} & -\tfrac{1}{4} & 0  
\end{bmatrix}
\]
 \caption{Submatrices \( B_{\text{tl}},  B_{\text{br}} \) and \(\boldsymbol{\omega}_{r}\) of $B$ for $p=2$ and $r=6$.}
 \label{fig:example_blocks_of_B}
\end{figure}

\begin{remark}[On the parameters involved in the construction of the left inverse]\label{rem:parameters} 
We summarize here the relevant information. Recall that the parameters \(r\) and \(q\) correspond to the size of the matrix \(A_{\mathrm{in}}\in\mathbb{R}^{r\times q}\) and are given in~\eqref{eq:r locality width} and~\eqref{eq:q number of columns of Ain}, respectively. 

On the other hand, \(\ell\) denotes the smallest integer such that the matrix \(A_{\mathrm{in}}\) satisfies the requirements of Procedure~\ref{proce:bartels}, allowing it to be used as the local matrix for all columns of \(A\) except for the first and last \(\ell\) ones.

The parameters \(t\) and \(l\) are chosen so that the matrices \(A_{\mathrm{tl}}\) and \(A_{\mathrm{br}}\), of size \(t\times l\), satisfy the requirements of  Procedure~\ref{proce:bartels}, and can thus be used as the local matrices for the first and last \(\ell\) columns of \(A\), respectively.

Finally, \(z\) specifies the number of zero entries to be inserted in the \((\ell+1)\)-th row of \(B\) before placing the vector \(\boldsymbol{\omega}_r\), in accordance with the assembly procedure of the left inverse.

These requirements do not determine a unique configuration, since multiple parameter choices may yield full-rank submatrices. 
For the purposes of our analysis, we adopt the values reported in Table~\ref{tab:all-p}.
\end{remark}

\begin{table}[h!]
\centering
\scriptsize
\captionsetup[subtable]{justification=centering}
\begin{subtable}{0.48\textwidth}
\centering
\caption*{$p = 1$}
\label{tab:p1}
\begin{tabular}{cccccc}
\toprule
$r$ & $q$ & $t$ & $l$ & $\ell$ & $z$ \\
\midrule
3  & 3 & 2 & 2 & 1 & 1 \\
5  & 3 & 2 & 2 & 1 & 0 \\
7  & 5 & 4 & 3 & 2 & 1 \\
9  & 5 & 4 & 3 & 2 & 0 \\
\bottomrule
\end{tabular}
\end{subtable}\hfill
\begin{subtable}{0.48\textwidth}
\centering
\caption*{$p = 2$}
\label{tab:p2}
\begin{tabular}{cccccc}
\toprule
$r$ & $q$ & $t$ & $l$ & $\ell$ & $z$ \\
\midrule
4  & 3 & 4 & 3 & 2 & 2 \\
6  & 5 & 6 & 4 & 3 & 3 \\
8  & 5 & 6 & 4 & 3 & 2 \\
10 & 7 & 8 & 5 & 4 & 3 \\
12 & 7 & 8 & 5 & 4 & 2 \\
\bottomrule
\end{tabular}
\end{subtable}

\vspace{0.5em}

\begin{subtable}{0.48\textwidth}
\centering
\caption*{$p = 3$}
\label{tab:p3}
\begin{tabular}{cccccc}
\toprule
$r$ & $q$ & $t$ & $l$ & $\ell$ & $z$ \\
\midrule
5  & 5 & 8  & 6 & 4 & 5 \\
7  & 5 & 8  & 6 & 4 & 4 \\
9  & 7 & 10 & 7 & 5 & 5 \\
11 & 7 & 10 & 7 & 5 & 4 \\
13 & 9 & 12 & 8 & 6 & 5 \\
15 & 9 & 12 & 8 & 6 & 4 \\
\bottomrule
\end{tabular}
\end{subtable}\hfill
\begin{subtable}{0.48\textwidth}
\centering
\caption*{$p = 4$}
\label{tab:p4}
\begin{tabular}{cccccc}
\toprule
$r$ & $q$ & $t$ & $l$ & $\ell$ & $z$ \\
\midrule
6  & 5  & 10 & 7  & 5 & 6 \\
8  & 7  & 12 & 8  & 6 & 7 \\
10 & 7  & 12 & 8  & 6 & 6 \\
12 & 9  & 14 & 9  & 7 & 7 \\
14 & 9  & 14 & 9  & 7 & 6 \\
16 & 11 & 16 & 10 & 8 & 7 \\
18 & 11 & 16 & 10 & 8 & 6 \\
\bottomrule
\end{tabular}
\end{subtable}
\caption{
The parameters correspond to the local matrix 
$A_{\mathrm{in}} \in \mathbb{R}^{p \times q}$, 
which is used for all interior columns of the subdivision matrix $A$, 
except for the first and last $\ell$ columns, 
where the local matrices 
$A_{\mathrm{tl}}$ and $A_{\mathrm{br}}$ of size $t \times l$ 
are employed, respectively.
The value of the parameter $z$, used in the assembly of the left inverse 
matrix $B$, is also reported.
}
\label{tab:all-p}
\end{table}

The above construction yields the following result.

\begin{proposition}\rm
	Let \( A \in \mathbb{R}^{n \times \hat{n}} \) be a subdivision matrix defined by~\eqref{knot insertion 1D}, 
	and let \( B \in \mathbb{R}^{\hat{n} \times n} \) be the matrix constructed as above. Then,
	\[
	BA = I_{\hat{n}}.
	\]
\end{proposition}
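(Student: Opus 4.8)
The plan is to verify the identity $BA=I_{\hat n}$ row by row, splitting the index set $\{1,\dots,\hat n\}$ into the three regions dictated by the construction of $B$: the top rows $1\le j\le \ell$ (coming from $B_{\mathrm{tl}}$), the interior rows $\ell+1\le j\le \hat n-\ell$ (coming from $B_{\mathrm{core}}$), and the bottom rows $\hat n-\ell+1\le j\le \hat n$ (coming from $B_{\mathrm{br}}$). The backbone of the argument is that each of the three local matrices $A_{\mathrm{in}}$, $A_{\mathrm{tl}}$, $A_{\mathrm{br}}$ is, by Procedure~\ref{proce:bartels} and condition~\eqref{cond:submatrix-selection}, of full column rank; hence $A_{\mathrm{in}}^T A_{\mathrm{in}}$, $A_{\mathrm{tl}}^T A_{\mathrm{tl}}$, $A_{\mathrm{br}}^T A_{\mathrm{br}}$ are invertible and the associated Moore--Penrose pseudoinverses are genuine left inverses: $B_{\mathrm{in}}A_{\mathrm{in}}=I_q$, $\tilde B_{\mathrm{tl}}A_{\mathrm{tl}}=I_l$ and $\tilde B_{\mathrm{br}}A_{\mathrm{br}}=I_l$. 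I would record these three local identities at the outset, as every subsequent case reduces to one of them.

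For the interior rows I would exploit the two shift structures simultaneously. Each interior column of $A$ is the vector $\boldsymbol{\eta}$ shifted downward with stride $\sigma=2$, while each interior row of $B$ is $\boldsymbol{\omega}_{r}$ shifted rightward with the same stride $\sigma=2$, and $\boldsymbol{\omega}_{r}$ is by definition the central row of $B_{\mathrm{in}}$. Fixing an interior index $j$, the nonzero entries of the $j$-th row of $B$ meet only a contiguous window of columns of $A$; by the stride-$2$ alignment, the restriction of that row to the window is exactly $\boldsymbol{\omega}_{r}$, and the restriction of those columns of $A$ to the relevant rows is exactly $A_{\mathrm{in}}$. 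Consequently the corresponding entries of the $j$-th row of $BA$ coincide with $\boldsymbol{\omega}_{r}A_{\mathrm{in}}$, which is the central row of $B_{\mathrm{in}}A_{\mathrm{in}}=I_q$, i.e. a single $1$ in the position aligned with column $j$ and $0$ elsewhere; all entries outside the window vanish trivially. Since all interior rows are translates of this one configuration, a single representative verification settles the whole block $\ell+1\le j\le\hat n-\ell$.

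For the two corner blocks the computation collapses to the corresponding local identity. For $1\le j\le\ell$ the $j$-th row of $B$ is the $j$-th row of $\tilde B_{\mathrm{tl}}$ padded with zeros beyond column $t$; since for $1\le j'\le l$ the $j'$-th column of $A$ agrees with that of $A_{\mathrm{tl}}$ on its first $t$ rows, the product reproduces $(\tilde B_{\mathrm{tl}}A_{\mathrm{tl}})_{jj'}=\delta_{jj'}$, while for $j'>l$ the column of $A$ has no nonzero entry among the first $t$ rows, so the product vanishes; together these give precisely the first $\ell$ rows of $I_{\hat n}$. The block $B_{\mathrm{br}}$ is handled symmetrically, using $\tilde B_{\mathrm{br}}A_{\mathrm{br}}=I_l$ and the last $\ell$ rows.

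The delicate point, and the one I expect to be the main obstacle, is the transition between the regions: one must rule out spurious contributions leaking across the boundaries between the corner and core blocks. Concretely, the leading-zero count $z$ must be large enough that the first interior row of $B$ meets only the regular (pure-$\boldsymbol{\eta}$) columns of $A$ and never the boundary-modified columns produced by the open knot vector, since the identity $\boldsymbol{\omega}_{r}A_{\mathrm{in}}=(\text{central row of }I_q)$ presupposes $\boldsymbol{\eta}$-columns; symmetrically, $t$ and $l$ must be large enough that the corner rows of $B$ see only the columns captured by $A_{\mathrm{tl}}$ (resp.\ $A_{\mathrm{br}}$) and that the first column beyond $A_{\mathrm{tl}}$ has its first nonzero entry below row $t$. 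These are exactly the alignment inequalities encoded in the parameter choices of Remark~\ref{rem:parameters} and Table~\ref{tab:all-p}; checking them amounts to index bookkeeping over the banded supports, case-split by the parity of $p$, and it is here that the specific values of $t,l,\ell,z,r,q$ must be invoked rather than merely the full-rank property.
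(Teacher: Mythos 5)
Your argument is correct in substance, but it takes a genuinely different route from the paper: the paper's entire proof is a one-line appeal to Theorem~2.1 of \cite{BGS2006-local-least-squares}, observing that $A_{\mathrm{tl}}$, $A_{\mathrm{in}}$, $A_{\mathrm{br}}$ were selected via Procedure~\ref{proce:bartels} and satisfy~\eqref{cond:submatrix-selection}, so the general result of Bartels--Golub--Samavati applies directly. You instead reconstruct that theorem's proof from scratch in the special case at hand, which is a legitimate and arguably more informative choice: it makes transparent that everything reduces to the three local left-inverse identities and the stride-$2$ alignment of $\boldsymbol{\eta}$-columns against $\boldsymbol{\omega}_r$-rows. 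One remark on your ``delicate point'': the absence of spurious contributions across block boundaries is not really an extra alignment condition to be imposed on $t,l,\ell,z$ --- it is automatic from step~2 of Procedure~\ref{proce:bartels}, since the columns discarded when forming $A_j$ are by definition exactly those that vanish on the selected rows, i.e.\ on the entire support of the $j$-th row of $B$; hence their inner product with that row is zero for free. What the parameter table in Remark~\ref{rem:parameters} actually certifies is the converse bookkeeping, namely that the concrete blocks $A_{\mathrm{in}}$, $A_{\mathrm{tl}}$, $A_{\mathrm{br}}$ with those dimensions really are the submatrices produced by the Procedure (all and only the nonzero columns are retained, $j$ sits at the central position of $A_{\mathrm{in}}$, and full column rank holds). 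With that reframing your case analysis closes cleanly; the trade-off is that the paper's citation is shorter, while your version is self-contained and pinpoints exactly where the specific values of $r,q,t,l,\ell,z$ enter.
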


\begin{proof}
	Since the submatrices \( A_{\mathrm{tl}}, A_{\mathrm{br}}, A_{\mathrm{in}} \) were selected according to Procedure~\ref{proce:bartels} 
	and satisfy condition~\eqref{cond:submatrix-selection}, 
	they also fulfill the hypotheses of~\cite[Theorem~2.1]{BGS2006-local-least-squares}. 
	Consequently, the assembled matrix \( B \) is a left inverse of \( A \), that is, \( BA = I_{\hat{n}} \).
\end{proof}

\begin{remark}\label{rem:realition_with_Golub_&_Samavati_2006}
	As suggested in~\cite{BGS2006-local-least-squares}, 
	a natural strategy for constructing a left inverse is to build it row by row, 
	ensuring that each row satisfies the identity condition when multiplied with \( A \), i.e., \( BA = I_{\hat{n}} \). 
	Specifically, in the case of constructing the \( j \)-th row of a left inverse matrix \( B \) for $\ell < j \leq \hat{n}-\ell$,  
	we seek a row vector \( \mathbf{x}^T \in \mathbb{R}^{1 \times r} \) satisfying 
	\[
	\mathbf{x}^T A_{\mathrm{in}} = \mathbf{e}_i^T,
	\]
	which, upon transposition, leads to the linear system
	\begin{equation}\label{eq:system_for_omega_r}
		A_{\mathrm{in}}^T \mathbf{x} = \mathbf{e}_i.
	\end{equation}
	Here, \( \mathbf{e}_i \) denotes the \( i \)-th canonical basis vector of \( \mathbb{R}^q \), with \( i \) typically taken to be the central index, which is well defined because \( q \) is odd. 
	Notice that~\eqref{eq:system_for_omega_r} in general admits infinitely many solutions since \( A_{\mathrm{in}} \) has full column rank (by condition~\eqref{cond:submatrix-selection}) and \( q \le r \). 
	Choosing $\boldsymbol{\omega}_r$ as the central row of~\eqref{eq:omega_r_definition} amounts to selecting, among all possible solutions of~\eqref{eq:system_for_omega_r}, the least-squares solution, i.e.,
	\[
	\boldsymbol{\omega}_r^T = A_{\mathrm{in}} (A_{\mathrm{in}}^T A_{\mathrm{in}})^{-1} \mathbf{e}_i.
	\]
	Thus, by standard results in least-squares theory, $\boldsymbol{\omega}_r^T$ is the \emph{minimum-norm solution} to the system~\eqref{eq:system_for_omega_r} (see, e.g.,~\cite{GOLUB2013MC}).
\end{remark}

The next section presents arguments that justify selecting the minimum-norm solution from the set of all possible solutions of~\eqref{eq:system_for_omega_r}.

\subsection{On the importance of choosing the minimum-norm solution}

As we already mention, the matrix \( B \in \mathbb{R}^{\hat{n} \times n} \) is a block matrix. The central and main submatrix of \(B\), \(B_{\text{core}}\), is sparse. 
Each row contains a block \( \boldsymbol{\omega}_r \in \mathbb{R}^r \) located at different positions, with zeros elsewhere. Thus, it holds that \(\|B_{\text{core}}\|_\infty=\|\boldsymbol{\omega}_r\|_1\). 
This property allows us to bound \( \|B\|_\infty \) in terms of \( \|\boldsymbol{\omega}_r\|_2 \). 
Indeed, using norm equivalence in \( \mathbb{R}^n \), we have that
\begin{align*}
\|\boldsymbol{\omega}_r\|_2 \le  \|\boldsymbol{\omega}_r\|_1 = \|B_{\text{core}}\|_\infty \le \|B\|_{\infty}. 
\end{align*}
On the other hand, since $\boldsymbol{\omega}_r$ has at most \(r\) non-zero entries, $\|B_{\text{core}}\|_\infty = \|\boldsymbol{\omega}_r\|_1 \le  \sqrt{r} \|\boldsymbol{\omega}_r\|_2,$ and so
\begin{align}\label{eq:upper_bound_B_infty_norm}
\|B\|_\infty \le \max\{\|B_{\text{tl}}\|_\infty,\|B_{\text{br}}\|_\infty,\|B_{\text{core}}\|_\infty\}\le \max\{\|B_{\text{tl}}\|_\infty,\|B_{\text{br}}\|_\infty,\sqrt{r} \|\boldsymbol{\omega}_r\|_2\}.
\end{align}

These two estimates reveal that controlling \( \|\boldsymbol{\omega}_r\|_2 \) plays a central role in bounding \( \|B\|_\infty \). From a numerical standpoint, having a bound on \( \|B\|_\infty \) is advantageous, as it implies a form of stability when applying the inverse operator. Specifically, the inequality
\[
\|B \mathbf{c}\|_\infty \leq \|B\|_\infty \| \mathbf{c} \|_\infty
,\]
shows that large values of \( \|B\|_\infty \) may amplify errors or noise in the vector \( \mathbf{c} \). Therefore, it seems that a \(\boldsymbol{\omega}_r\) with small Euclidean norm directly contributes to the numerical robustness of the inverse process.

On the other hand, suppose we consider two locality widths \( r < r' \), with corresponding \(A_{\text{in}}\) submatrices \( A_r \in \mathbb{R}^{r \times q} \) and \( A_{r'} \in \mathbb{R}^{r' \times q'} \). Then the solution set of the system
\[
A_r^T \mathbf{x} = \mathbf{e}_i,
\]
is contained in the solution set of the larger system
\[
A_{r'}^T \mathbf{y} = \mathbf{e}_{i'}.
\]
This inclusion holds because the smaller system corresponds to a restriction of the larger one, and any solution for \( A_r \) can be extended (e.g., by padding with zeros) to a solution for \( A_{r'} \).
In particular, let \( \boldsymbol{\omega}_r \) and \( \boldsymbol{\omega}_{r'} \) denote the unique minimum-norm solutions of the systems. Then, due to the nestedness of the solution spaces, it follows that
\[
\| \boldsymbol{\omega}_{r'} \|_2 \leq \| \boldsymbol{\omega}_r \|_2.
\]
This inequality reflects the fact that the minimum norm over a larger feasible set cannot exceed that over a smaller one.

Summarizing, in terms of this formal analysis, increasing locality width \( r \) enlarge the size of \( \boldsymbol{\omega}_r \), allowing for smaller Euclidean norms, but this benefit can be partially offset by the growth of the factor \( \sqrt{r} \)  in the upper bound~\eqref{eq:upper_bound_B_infty_norm}. 
Thus, there is a trade-off between locality, sparsity of the inverse and numerical stability, where choosing an appropriate \( r \) is essential to control both the norm of \( B \) and the computational cost of applying it. Some explicit numerical computations on the \( \|B\|_{\infty} \) and \( \|\boldsymbol{\omega}_r\|_{2}\) for different values of $p$ and $r$ are presented in Table~\ref{tab:coarsening1D}.

\paragraph{Some explicit expressions for  $\boldsymbol{\omega}_r$.} We now report the vectors \( \boldsymbol{\omega}_r \), 
which characterize the left inverses associated with different locality widths.  
Each vector \(\boldsymbol{\omega}_r\) can be written as \(\boldsymbol{\omega}_r = \alpha_r \boldsymbol{\mu}_r\), 
with the property that $\alpha_r^{-1}$ and the components of $\boldsymbol{\mu}_r$ are integers. In Tables~\ref{tab:omega_for_linear_bsplines},~\ref{tab:omega_for_quadratic_bsplines},~\ref{tab:omega_for_cubic_bsplines} and~\ref{tab:omega_for_quartic_bsplines}, the components of \( \boldsymbol{\omega}_r \) are shown for increasing values of \( r \), displaying the values of 
\(\alpha_r\) and \(\boldsymbol{\mu}_r\), corresponding to the cases \(p=1,2,3,4\), respectively.

\begin{table}[h!]
  \centering \scriptsize
  \begin{tabular}{|c|c|c c c c c c c c c|}
  \hline
  $r$ & $\alpha_r$ & \multicolumn{9}{c|}{$\boldsymbol{\mu}_r$}  \\
  \hline
  $3$ & $1$ &  &  &  & 0 & 1 & 0 &  &  & \\
  \hline
  $5$ & $\frac{1}{7}$ &  &  & -1 & 2 & 5 & 2 & -1 & & \\
  \hline
  $7$ & $\frac{1}{7}$ &  & 0 & -1 & 2 & 5 & 2 & -1 & 0 &  \\
  \hline
  $9$ & $\frac{1}{41}$ & 1 & -2 & -5 & 12 & 29 & 12 & -5 & -2 & 1 \\
  \hline
  \end{tabular}
  \caption{Vectors $\boldsymbol{\omega}_r = \alpha_r \boldsymbol{\mu}_r$ for linear B-splines ($p=1$) for different values of \( r \).}
  \label{tab:omega_for_linear_bsplines}
\end{table}

\begin{table}[h] 
    \centering \scriptsize
\begin{tabular}{|c|c|c c c c c c c c c c c c|}
\hline
$r$& $\alpha_r$ &  & &  &  &  & $\boldsymbol{\mu}_r$ & &  &  &  &  & \\
\hline
$4$ & $\frac{1}{4}$  &  &  &  &  & -1 & 3 & 3 & -1 &  &  &  & \\
\hline
$6$ & $\frac{1}{4}$   &  &  &  & 0 & -1 & 3 & 3 & -1 & 0 &  &  & \\
\hline
$8$ & $\frac{1}{40}$ &  &  & 3 & -9 & -1 & 27 & 27 & -1 & -9 & 3 &  & \\
\hline
$10$ & $\frac{1}{40}$   &  & 0 & 3 & -9 & -1 & 27 & 27 & -1 & -9 & 3 & 0 & \\
\hline
$12$ & $\frac{1}{364}$ & -9 & 27 & 3 & -81 & -1 & 243 & 243 & -1 & -81 & 3 & 27 & -9 \\
\hline
\end{tabular}
\caption{Vectors \(\boldsymbol{\omega}_r = \alpha_r \boldsymbol{\mu}_r \) for quadratic B-splines ($p=2$) for different values of \( r \).}
\label{tab:omega_for_quadratic_bsplines}
\end{table}

\begin{table}[h!]
  \centering \scriptsize
  \begin{tabular}{|c|c|c c c c c c c c|}
  \hline
  $r$ & $\alpha_r$ &  &  &  & $\boldsymbol{\mu}_r$ &  &  &  &  \\
  \hline
  $5$ & $\frac{1}{4}$ &  &  &  &  &  & 0 & -2 & 8 \\
  \hline
  $7$ & $\frac{1}{196}$  &  & & &  & 23 & -92 & 63 & 208  \\
  \hline
  $9$ & $\frac{1}{196}$ & & & & 0 & 23 & -92 & 63 & 208 \\
  \hline
  $11$ & $\frac{1}{12038}$ & & & -569 & 2276 & -1833 & -4048 & 4479 & 11428\\
  \hline
  $13$ & $\frac{1}{12038}$ & & 0 & -569 & 2276 & -1833 & -4048 & 4479 & 11428 \\
  \hline
  $15$ & $\frac{1}{692104}$ & 14351 & -57404 & 46919 &  99344 & -128105 &-213916 & 263423 & 644480 \\
  \hline
  \end{tabular}
  \caption{Portion of vectors \(\boldsymbol{\omega}_r = \alpha_r \boldsymbol{\mu}_r \) for cubic B-splines ($p=3$) for different values of \( r \). The remaining values are obtained by symmetry.}
   \label{tab:omega_for_cubic_bsplines}
\end{table}
\begin{table}[h!]
  \centering \scriptsize
  \begin{tabular}{|c|c|c c c c c c c c c|}
  \hline
  $r$ & $\alpha_r$ &  &  &  & $\boldsymbol{\mu}_r$ &  &  &  &  &  \\
  \hline
  $6$  & $\frac{1}{16}$ &  &  &  &  &  &  & 3 & -15 & 20 \\
  \hline
  $8$  & $\frac{1}{16}$ &  &  &  &  &  & 0 & 3 & -15 & 20  \\ 
  \hline
  $10$ & $\frac{1}{1936}$ &  &  &  &  & -130 & 650 & -937 & -515 & 1900 \\ 
  \hline
  $12$ & $\frac{1}{1936}$ &  &  &  & 0 & -130 & 650 & -937 & -515 & 1900 \\ 
  \hline
  $14$ & $\frac{1}{20704}$ &  &  & 665 & -3325 & 4930 & 1950 & -9993 & -2875 & 19000 \\ 
  \hline
  $16$ & $\frac{1}{20704}$  &  & 0 & 665 & -3325 & 4930 & 1950 & -9993 & -2875 & 19000 \\ 
  \hline
  \end{tabular}
  \caption{Portion of vectors $\boldsymbol{\omega}_r = \alpha_r \boldsymbol{\mu}_r$ for quartic B-splines ($p=4$) for different values of~\( r \). Remaining entries are obtained by symmetry.}
  \label{tab:omega_for_quartic_bsplines}
\end{table}

\subsection{B-spline ancestors and local coarsening}

We begin by showing that if a spline has support strictly contained within the interval \([a,b]\), then, unlike the \(L^2\)-projection and other global methods that yield a coarse approximation with support covering the entire interval, the coarsening operators proposed in this article act locally. In particular, the support of the resulting approximation extends only slightly beyond that of the original spline, with the precise enlargement depending on the locality width used to construct the operator.

More precisely, just as each coarse B-spline has children in the fine space, 
we can associate ancestors in the coarse space to each fine B-spline, as described below.

The children of a coarse B-spline are the fine B-splines required to express it as a linear combination (see the bottom part of Figure~\ref{F:two_scale}). It is well known that the indices of the children of the \(j\)-th B-spline correspond to the row indices with nonzero entries in the \(j\)-th column of the subdivision matrix \(A\), as discussed in Section~\ref{sec:b_splines_and_subdivision_matrices}.

Each left inverse \(B\) of the subdivision matrix \(A\) induces a notion of ancestry. In this context, the ancestors of a fine B-spline are the coarse B-splines that the coarsening operator effectively uses to construct its approximation. Therefore, the number of ancestors depends on the locality width $r$ used to build \(B\), and corresponds to the number of nonzero entries in each column of \(B\). 

Specifically, if we consider fine B-splines whose support is sufficiently internal to the interval $[a,b]$, the number of ancestors depends only on the number of nonzero components of the vector $\boldsymbol{\omega}_r$, denoted by $\|\boldsymbol{\omega}_r\|_0$. If $\|\boldsymbol{\omega}_r\|_0$ is odd, the number of ancestors is $(\|\boldsymbol{\omega}_r\|_0-1)/2$ for some B-splines and $(\|\boldsymbol{\omega}_r\|_0+1)/2$ for others; whereas if $\|\boldsymbol{\omega}_r\|_0$ is even, the number of ancestors is always $\|\boldsymbol{\omega}_r\|_0/2$. 

This fact is illustrated in Figure~\ref{fig:ancestors}, which shows a central submatrix of $B$ with the nonzero entries marked by squares. The number of nonzero entries in the columns corresponding to the black squares confirms the validity of the above formula for the number of ancestors.

\begin{figure}[htbp]
    \centering
    \begin{subfigure}[b]{0.45\textwidth}
        \centering
        \includegraphics[width=\textwidth]{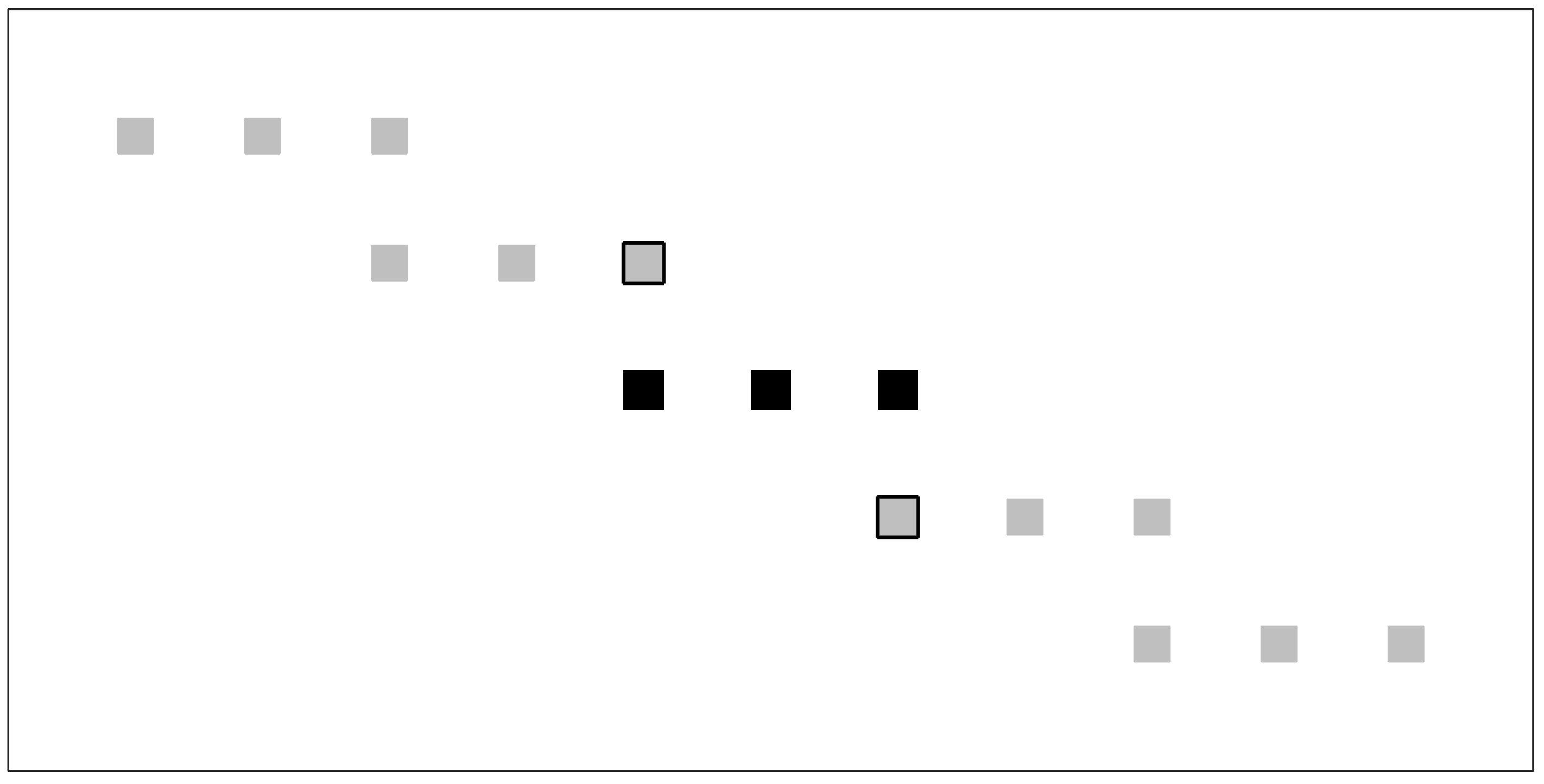}
        \caption{$\|\omega_r\|_0 = 3$}
    \end{subfigure}
    \hfill
    \begin{subfigure}[b]{0.45\textwidth}
        \centering
        \includegraphics[width=\textwidth]{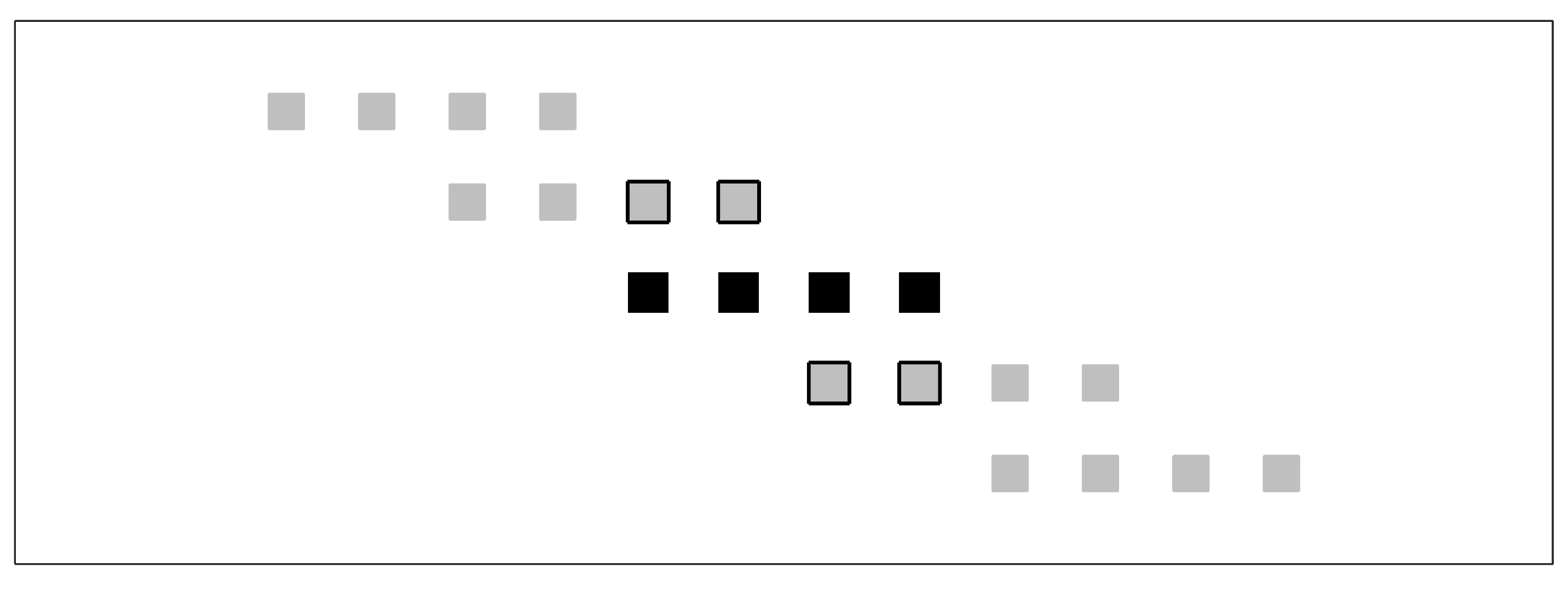}
        \caption{$\|\omega_r\|_0 = 4$}
    \end{subfigure}
    
    \vspace{0cm} 
    
    \begin{subfigure}[b]{0.45\textwidth}
        \centering
        \includegraphics[width=\textwidth]{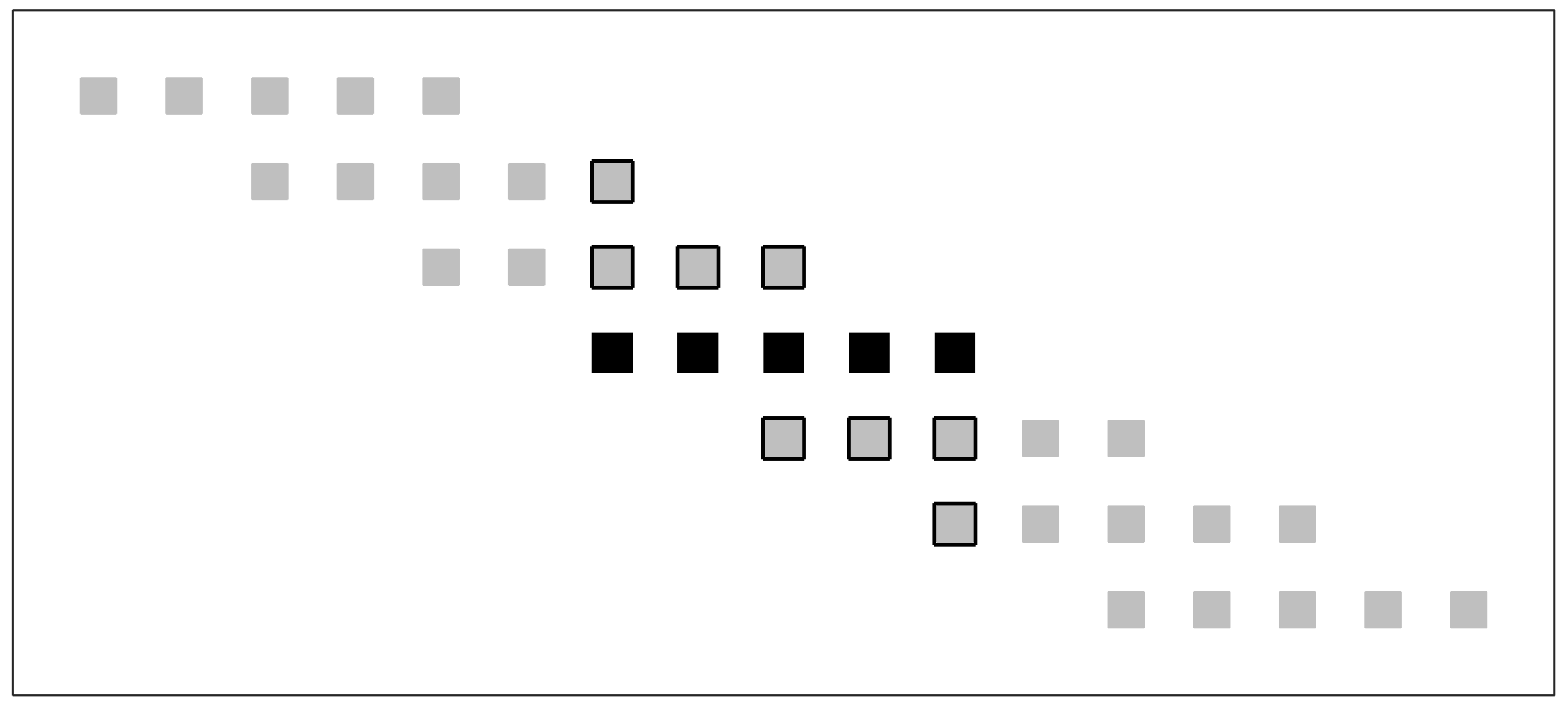}
        \caption{$\|\omega_r\|_0 = 5$}
    \end{subfigure}
    \hfill
    \begin{subfigure}[b]{0.45\textwidth}
        \centering
        \includegraphics[width=\textwidth]{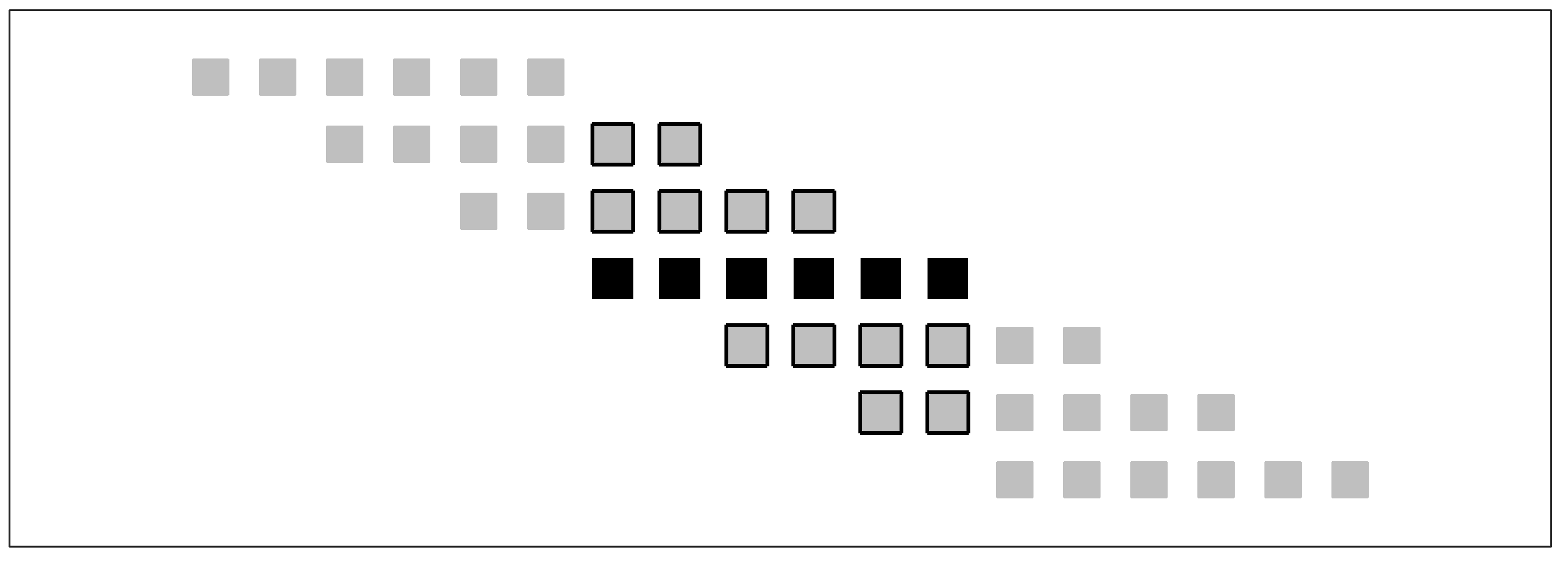}
        \caption{$\|\omega_r\|_0 = 6$}
    \end{subfigure}
    
    \vspace{0cm} 
    
    \begin{subfigure}[b]{0.45\textwidth}
        \centering
        \includegraphics[width=\textwidth]{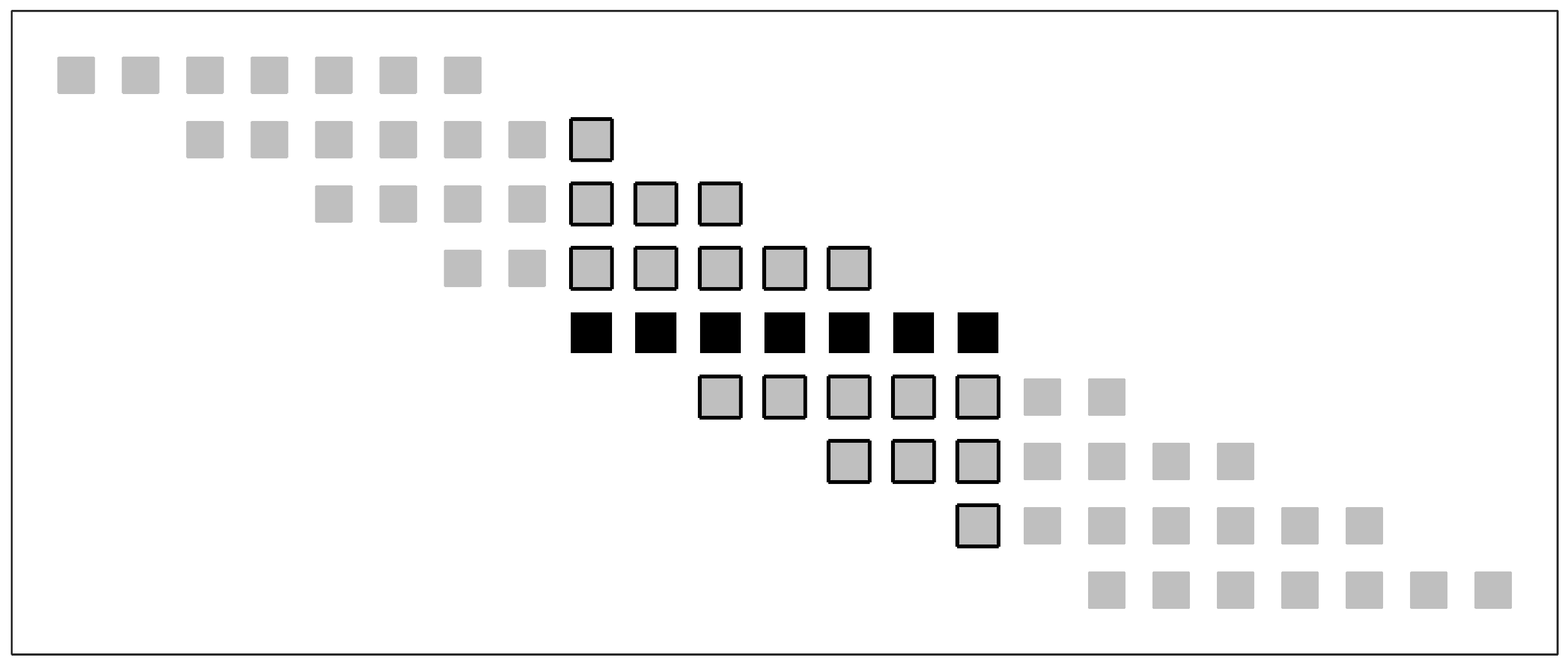}
        \caption{$\|\omega_r\|_0 = 7$}
    \end{subfigure}
    \hfill
    \begin{subfigure}[b]{0.45\textwidth}
        \centering
        \includegraphics[width=\textwidth]{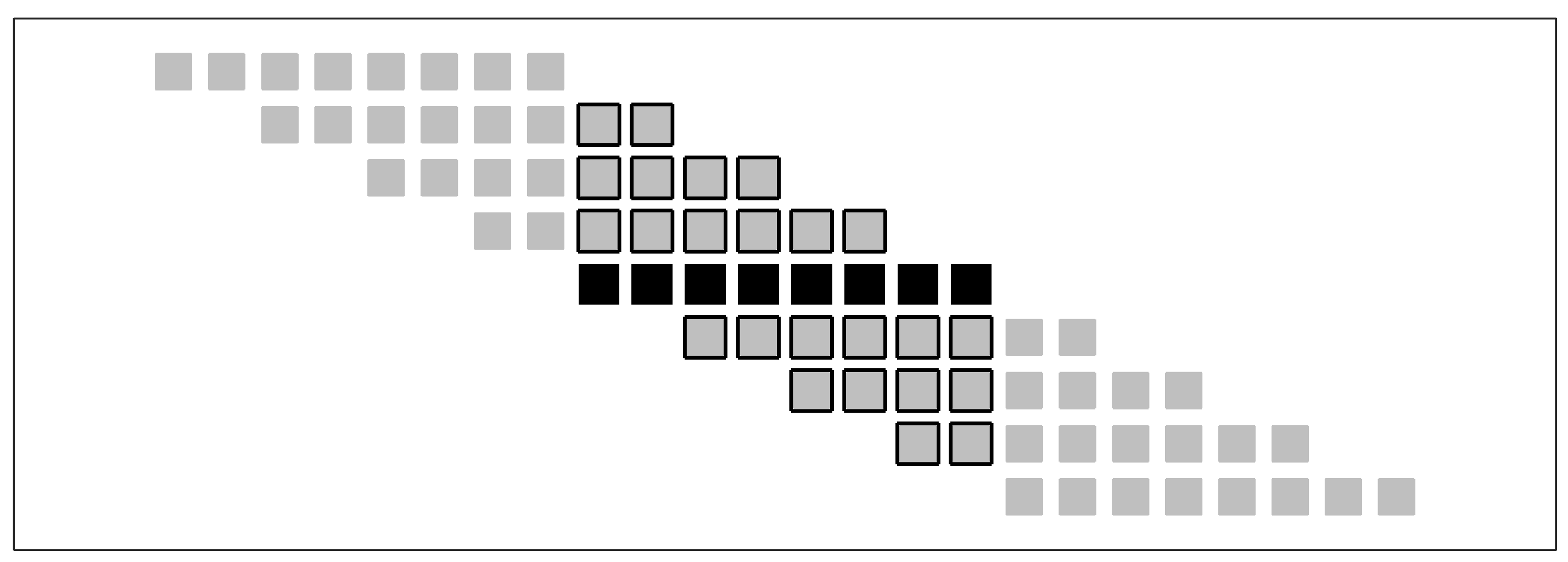}
        \caption{$\|\omega_r\|_0 = 8$}
    \end{subfigure}

    \caption{Schematic representation of the submatrix $B_{\textrm{core}}$ of the operator $B$ for different locality widths $r$. Each square indicates a nonzero entry; black squares mark the central vector. The nonzero entries in each highlighted column reveals the set of ancestors associated with each fine B-spline.}
    \label{fig:ancestors}
\end{figure}

In Figure~\ref{fig:ancestors_deg2}, we display the ancestors of quadratic B-splines for locality widths $r=4$ (middle) and $r=8$ (right). In this case, all B-splines have exactly $r/2$ ancestors. Moreover, two consecutive B-splines share the same set of ancestors. Both observations are consistent with the behavior shown in Figure~\ref{fig:ancestors} (right).

\begin{figure}[htbp]
	\centering
	\includegraphics[width=0.95\linewidth]{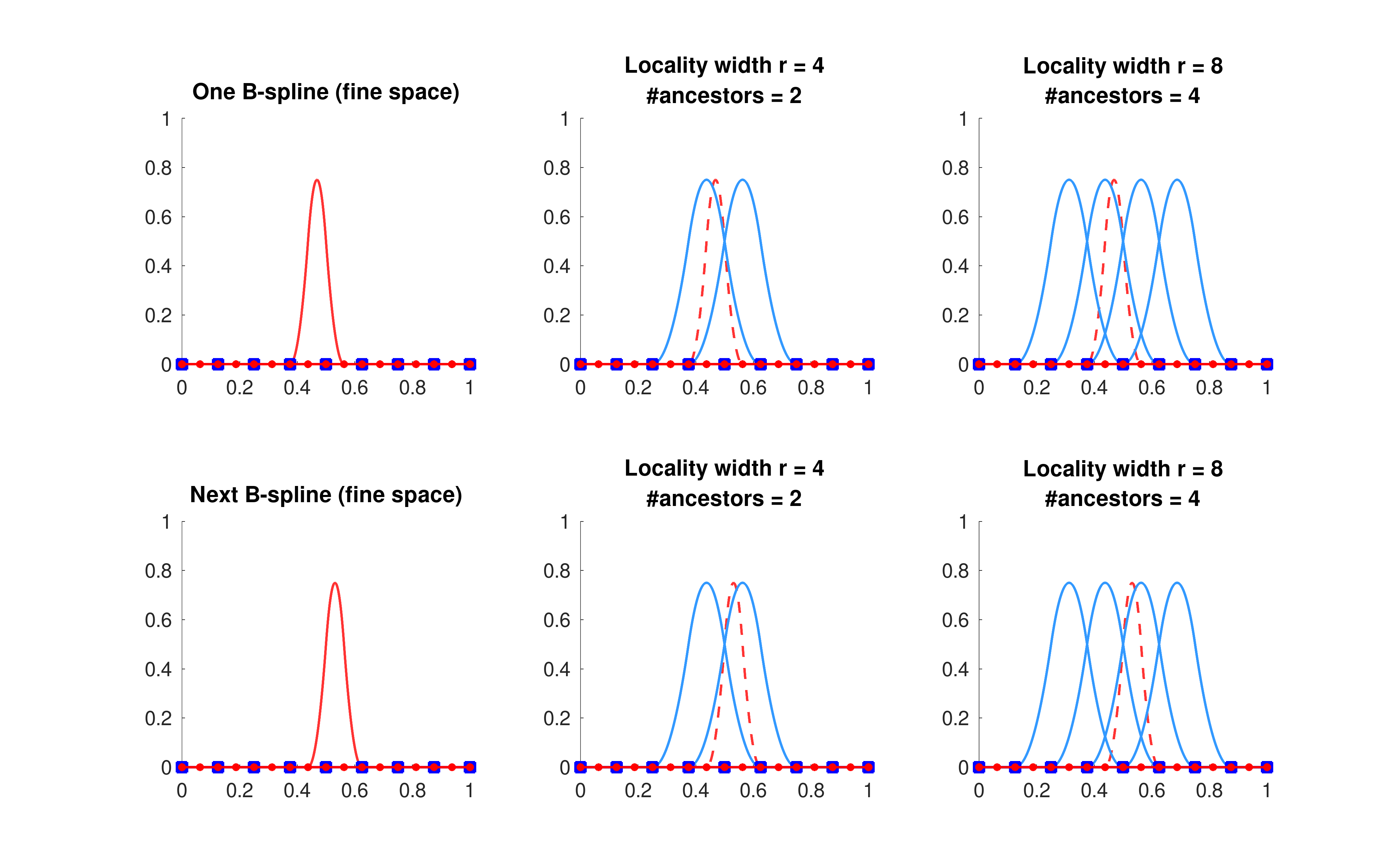}
	\caption{Ancestors of fine quadratic B-splines ($p=2$) in the coarse space, for different locality widths.}
	\label{fig:ancestors_deg2}
\end{figure}

We next turn to cubic B-splines, where the situation differs. The case of cubic B-splines is shown in Figure~\ref{fig:ancestors_deg3}, with locality widths $r=5$ (middle) and $r=7$ (right). In contrast to the quadratic case, for a fixed locality width the number of ancestors depends on whether the support of the fine B-spline coincides with a union of complete coarse intervals or, instead, whether its boundary knots belong exclusively to the fine mesh. Once again, this behavior is consistent with the structure of the matrix $B$ in Figure~\ref{fig:ancestors} (left).

\begin{figure}[htbp]
	\centering
	\includegraphics[width=0.95\linewidth]{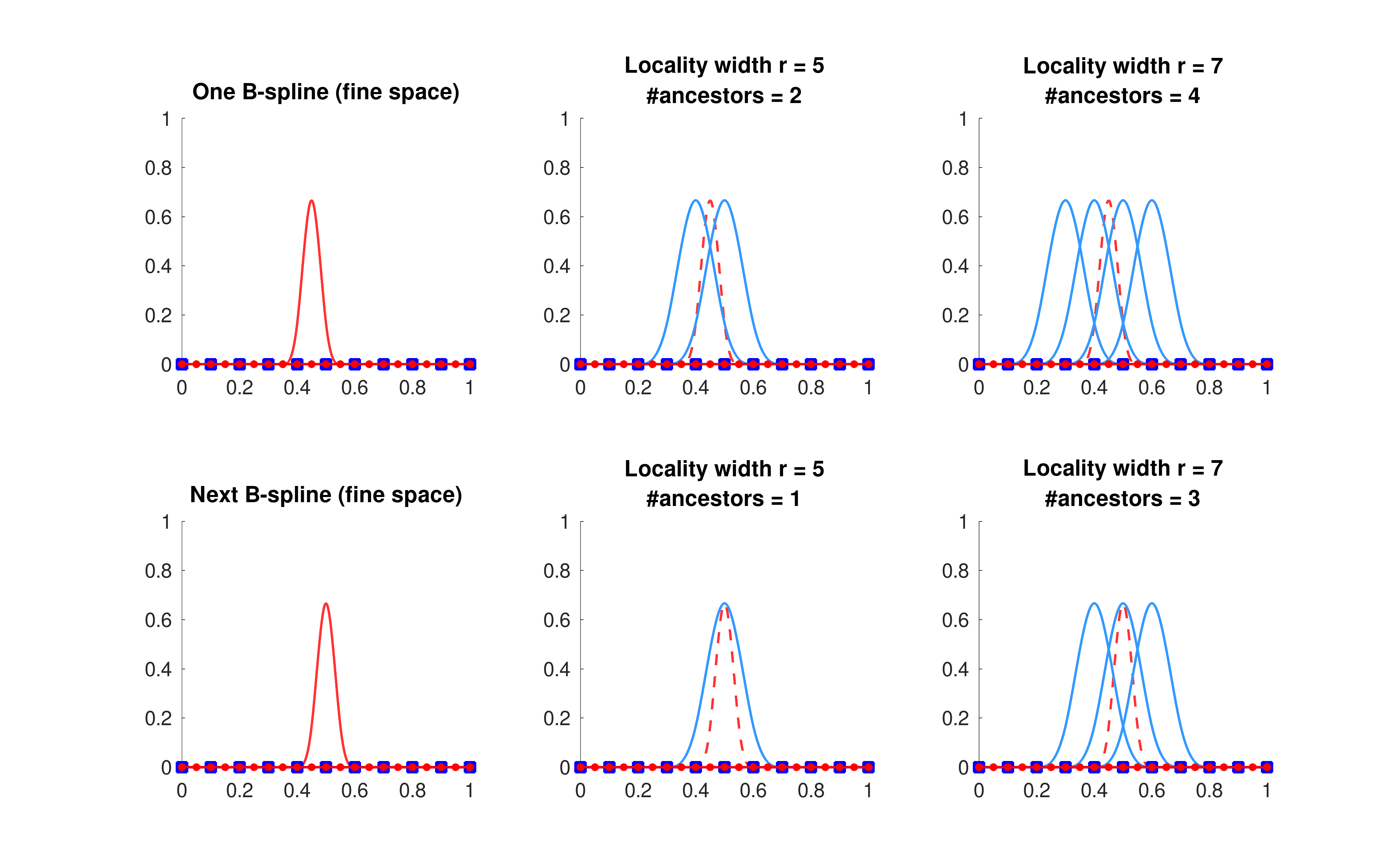}
	\caption{Ancestors of fine cubic B-splines ($p=3$) in the coarse space, for different locality widths.}
	\label{fig:ancestors_deg3}
\end{figure}

Finally, it should be noted that a smaller locality width $r$ gives rise to a more localized coarsening operator, introducing modifications solely within a neighborhood in which the original spline cannot be represented exactly in the coarse space.

\section{Coarsening operators for tensor-product splines}\label{sec:tensor product}

Let $\hat{\Xi}_x$ and $\hat{\Xi}_y$ be $(p+1)$-open knot vectors in the $x$- and $y$-directions, respectively, each associated with a uniform partition of given intervals in $\mathbb{R}$.  
Let $\Xi_x$ and $\Xi_y$ denote the refined knot vectors obtained by inserting the midpoints of all subintervals in $\hat{\Xi}_x$ and $\hat{\Xi}_y$, as decribed in Section~\ref{sec:b_splines_and_subdivision_matrices}.  

We denote by $\hat{\SSS}$ and $\SSS$ the corresponding tensor-product spline spaces spanned by the B-splines defined over the knot vectors
\[ \hat{\Xi} = \hat{\Xi}_x \times \hat{\Xi}_y,
\quad
\Xi = \Xi_x \times \Xi_y, \]
respectively. More especifically,
\[ \hat{\BB} = \{\hat{\beta}_1, \ldots, \hat{\beta}_{\hat{n}}\} 
= \{\hat{\beta}_i^x \hat{\beta}_j^y \}_{i=1,\dots,\hat{n}_x}^{j=1,\dots,\hat{n}_y},
\quad
\BB = \{\beta_1, \ldots, \beta_n\} 
= \{\beta_i^x \beta_j^y \}_{i=1,\dots,n_x}^{j=1,\dots,n_y},\]
denote the tensor-product B-spline bases for $\hat{\SSS}$ and $\SSS$.  
Each bivariate basis function $\hat{\beta}_i$ (resp.\ $\beta_i$) is a product of univariate B-splines in the $x$- and $y$-directions, with superscripts indicating the variable when needed.  
For instance, $\hat{\beta}^x_i(x)$ denotes a univariate coarse B-spline in the $x$-direction.

Following the notation introduced in Section~\ref{sec:b_splines_and_subdivision_matrices}, we consider the vector functions that collect the univariate B-splines in the $x$- and $y$-directions, both in the fine and the coarse space, that is,
\[ \boldsymbol{\beta}^x(x) := 
\begin{bmatrix}
	\beta^x_1(x) \\
	\beta^x_2(x) \\
	\vdots \\
	\beta^x_{n_x}(x)
\end{bmatrix},
\qquad
\boldsymbol{\beta}^y(y) := 
\begin{bmatrix}
	\beta^y_1(y) \\
	\beta^y_2(y) \\
	\vdots \\
	\beta^y_{n_y}(y)
\end{bmatrix},
\qquad
\hat{\boldsymbol{\beta}}^x(x) := 
\begin{bmatrix}
	\hat{\beta}^x_1(x) \\
	\hat{\beta}^x_2(x) \\
	\vdots \\
	\hat{\beta}^x_{\hat{n}_x}(x)
\end{bmatrix},
\qquad
\hat{\boldsymbol{\beta}}^y(y) := 
\begin{bmatrix}
	\hat{\beta}^y_1(y) \\
	\hat{\beta}^y_2(y) \\
	\vdots \\
	\hat{\beta}^y_{\hat{n}_y}(y)
\end{bmatrix}
.\]

Now, if $A_x \in \mathbb{R}^{n_x \times \hat{n}_x}$ and $A_y \in \mathbb{R}^{n_y \times \hat{n}_y}$ denote
the univariate subdivision matrices in the $x$- and $y$-direction, respectively, we have that
\begin{equation*}\label{E:subdivision matrix basis functions 2} \hat{\boldsymbol{\beta}}^x(x) = A_x^{T} \, \boldsymbol{\beta}^x(x), \qquad \hat{\boldsymbol{\beta}}^y(y) = A_y^{T} \, \boldsymbol{\beta}^y(y).\end{equation*}
Taking into account the \emph{vectorization operator}, denoted by $\vect(\cdot)$, that stacks the columns of a matrix into a single column vector, and the \emph{property of the Kronecker product for matrix equations} which establishes that
\[\vect(NXM)=(M^T\otimes N)\vect(X),\]
for arbitrary matrices $M$, $N$ and $X$, we obtain that
\begin{equation}\label{eq:vec knot insertion}\vect(\hat{\boldsymbol{\beta}}^x(x)\hat{\boldsymbol{\beta}}^y(y)^T) = \vect(A_x^{T} \, \boldsymbol{\beta}^x(x) \, \boldsymbol{\beta}^y(y)^TA_y)= (A_y^T\otimes A_x^T)\vect(\boldsymbol{\beta}^x(x) \, \boldsymbol{\beta}^y(y)^T).
\end{equation}

Let $\boldsymbol{\beta}(x,y)$ and $\hat{\boldsymbol{\beta}}(x,y)$ be the vector functions of fine and coarse tensor-product B-splines arranged so that the $i$-index varies fastest, which means that
\[\hat{\boldsymbol{\beta}}(x,y) := \vect(\hat{\boldsymbol{\beta}}^x(x)\hat{\boldsymbol{\beta}}^y(y)^T),\qquad  {\boldsymbol{\beta}}(x,y) := \vect({\boldsymbol{\beta}}^x(x){\boldsymbol{\beta}}^y(y)^T).\]
Hence, in view of~\eqref{eq:vec knot insertion}, it holds that
\[
\hat{\boldsymbol{\beta}}(x,y) = (A_y \otimes A_x)^{T}\,\boldsymbol{\beta}(x,y),
\]
and the tensor-product subdivision matrix $A \in \mathbb{R}^{n \times \hat{n}}$ is given by
\[
A = A_y \otimes A_x.
\]

Additionally, any $\hat{s} \in \hat{\SSS}$ can be expressed in terms of coarse B-spline coefficient matrix $\hat{C} \in \mathbb{R}^{\hat{n}_x \times \hat{n}_y}$:
\[
\hat{s}(x,y) = \vect(\hat{C})^T  \hat{\boldsymbol{\beta}}(x,y) = \vect(\hat{C})^T (A_y \otimes A_x)^{T}\,\boldsymbol{\beta}(x,y) = [(A_y \otimes A_x) \vect(\hat{C})]^T {\boldsymbol{\beta}}(x,y).
\]
Then, the fine B- spline coefficient matrix $C \in \mathbb{R}^{n_x \times n_y}$ satisfies
\[
\vect(C) = (A_y \otimes A_x)\,\vect(\hat{C}),
\qquad\text{or equivalently}\qquad
C = A_x\,\hat{C}\,A_y^{T}.
\]
The coarsening (reverse subdivision) process aims to recover $\hat{C}$ from $C$ using left inverses $B_x$ and $B_y$ of $A_x$ and $A_y$, respectively, i.e.,
\[
B_x A_x = I_{\hat{n}_x},
\qquad
B_y A_y = I_{\hat{n}_y}.
\]
Accordingly, the coarse coefficients are obtained as
\[
\hat{C} = B_x\,C\,B_y^{T},
\qquad\text{or equivalently}\qquad
\vect(\hat{C}) = (B_y \otimes B_x)\,\vect(C).
\]
Finally, using the mixed-product property of the Kronecker product, we obtain
\[
(B_y \otimes B_x)(A_y \otimes A_x)
= (B_y A_y) \otimes (B_x A_x)
= I_{\hat{n}_y} \otimes I_{\hat{n}_x}
= I_{\hat{n}_x \hat{n}_y},
\]
which confirms that $B_y \otimes B_x$ is indeed a left inverse of $A_y \otimes A_x$.

\paragraph{Extension to $D$-directional tensor-product spline spaces.} Let the knot vectors $\hat{\Xi}_1, \dots, \hat{\Xi}_D$ define the coarse tensor-product space $\hat{\SSS}$, and let $\Xi_1, \dots, \Xi_D$ be the refined versions defining the fine space $\SSS$. Denote by $A_d \in \mathbb{R}^{n_d \times \hat{n}_d}$ the univariate subdivision matrices for $d = 1, \dots, D$, and by $B_d \in \mathbb{R}^{\hat{n}_d \times n_d}$ their left inverses.

The tensor-product subdivision matrix is defined as
\[
A = A_D \otimes \cdots \otimes A_1,
\]
and the tensor-product coarsening operator as
\[
B = B_D \otimes \cdots \otimes B_1,
\]
satisfying
\[
BA = (B_D A_D) \otimes \cdots \otimes (B_1 A_1) 
= I_{\hat{n}_1 \cdots \hat{n}_D}.
\]

\section{Experimental analysis}\label{sec:tests}

We conclude this article with a series of numerical experiments that illustrate and further support the theoretical analysis. First, we report and discuss tables of suitably chosen norms that allow us to assess both the stability and the approximation quality of the proposed coarsening operators, considering the cases of univariate splines as well as tensor-product spline spaces. Next, we examine the \emph{optimality} curves of the coarsening process in two dimensions for a selected spline, comparing the approximation errors produced by the proposed operators with the best possible errors obtained through successive uniform coarsenings. Finally, we highlight the advantages that arise from the local nature of the proposed operators.

\subsection{Stability and approximation quality for coarsening operators}

When $\hat{\SSS}$ and $\SSS$ are spline spaces with $\hat{\SSS} \subseteq \SSS$, as discussed in the previous sections, we consider the \emph{inclusion operator} $\mathcal{I} : \hat{\SSS} \to \SSS$, naturally associated with the subdivision matrix $A$. Correspondingly, we define a \emph{coarsening operator} $\mathcal{R} : \SSS \to \hat{\SSS}$, associated with a matrix $B$ constructed in Section~\ref{sec:construction_of_left_inverses} using a fixed locality width. Requiring $B$ to be a left inverse of $A$ is equivalent to imposing that $\mathcal{R}(\hat{s}) = \hat{s}$ for all $\hat{s} \in \hat{\SSS}$.

The $L^\infty$-stability of $\mathcal{R}$ is directly linked to the matrix norm $\|B\|_\infty$. Moreover, thanks to the $L^\infty$-stability of the B-spline basis, the quantity $\|I - AB\|_\infty$ can be interpreted as a measure of the approximation quality of $\mathcal{R}$. Alternatively, since we are working with uniform partitions, the $L^2$-stability of the B-spline basis justifies using $\|I - AB\|_2$ as an indicator of the approximation quality in the $L^2$-norm.

In this numerical study, we evaluate the stability and approximation quality of several coarsening operators. Table~\ref{tab:coarsening1D} reports the values of the aforementioned norms for different locality widths $r$, using different polynomial degrees. The results for tensor-product coarsening operators, as introduced in Section~\ref{sec:tensor product} and using the same locality widths in both parametric directions, are presented in Table~\ref{tab:coarsening2D}. We emphasize that the reported values in both tables remain unchanged when increasing the number of breakpoints in the underlying uniform partitions.

\begin{table}[!ht]
  \centering
  \footnotesize
  \renewcommand{\arraystretch}{1.1}
  \setlength{\tabcolsep}{5pt}

  \begin{minipage}{0.47\textwidth}
    \centering
    \begin{tabular}{ccccc}
      \toprule
      $r$ & $\|B\|_\infty$ & $\|\boldsymbol{\omega}_r\|_2$ & $\|I - AB\|_2$ & $\|I - AB\|_\infty$ \\
      \midrule
      3  & 1.00 & 1.00 & 1.41 & 2.00 \\
      5  & 1.57 & 0.85 & 1.10 & 1.86 \\
      7  & 1.57 & 0.85 & 1.09 & 2.02 \\
      9  & 1.68 & 0.84 & 1.09 & 2.02 \\
      \bottomrule
    \end{tabular}
    \caption*{(a) Linear splines ($p = 1$)}
  \end{minipage}
  \hfill
  \begin{minipage}{0.47\textwidth}
    \centering
    \begin{tabular}{ccccc}
      \toprule
      $r$ & $\|B\|_\infty$ & $\|\boldsymbol{\omega}_r\|_2$ & $\|I - AB\|_2$ & $\|I - AB\|_\infty$ \\
      \midrule
      4  & 2.33 & 1.12 & 1.25 & 1.58 \\
      6  & 2.29 & 1.12 & 1.25 & 1.68 \\
      8  & 2.29 & 1.01 & 1.07 & 1.59 \\
      10 & 2.29 & 1.01 & 1.07 & 1.62 \\
      12 & 2.29 & 1.00 & 1.06 & 1.53 \\
      \bottomrule
    \end{tabular}
    \caption*{(b) Quadratic splines ($p = 2$)}
  \end{minipage}

  \vspace{1em} 

  \begin{minipage}{0.47\textwidth}
    \centering
    \begin{tabular}{ccccc}
      \toprule
      $r$ & $\|B\|_\infty$ & $\|\boldsymbol{\omega}_r\|_2$ & $\|I - AB\|_2$ & $\|I - AB\|_\infty$ \\
      \midrule
      5  & 3.10 & 2.12 & 3.16 & 4.05 \\
      7  & 3.10 & 1.34 & 1.44 & 3.20 \\
      9  & 3.26 & 1.34 & 1.42 & 3.27 \\
      11 & 3.26 & 1.24 & 1.33 & 3.15 \\
      13 & 3.38 & 1.24 & 1.32 & 3.19 \\
      15 & 3.38 & 1.22 & 1.31 & 3.16 \\
      \bottomrule
    \end{tabular}
    \caption*{(c) Cubic splines ($p = 3$)}
  \end{minipage}
  \hfill
  \begin{minipage}{0.47\textwidth}
    \centering
    \begin{tabular}{ccccc}
      \toprule
      $r$ & $\|B\|_\infty$ & $\|\boldsymbol{\omega}_r\|_2$ & $\|I - AB\|_2$ & $\|I - AB\|_\infty$ \\
      \midrule
       6  & 4.75 & 2.23 & 2.30 & 3.25 \\
       8  & 4.75 & 2.23 & 2.30 & 3.25 \\
      10  & 4.53 & 1.66 & 1.41 & 2.84 \\
      12  & 4.48 & 1.66 & 1.40 & 2.86 \\
      14  & 4.48 & 1.54 & 1.31 & 2.68 \\
      16  & 4.46 & 1.54 & 1.31 & 2.70 \\
      18  & 4.46 & 1.51 & 1.29 & 2.59 \\
      \bottomrule
    \end{tabular}
    \caption*{(d) Quartic splines ($p = 4$)}
  \end{minipage}
  \caption{Univariate splines: Stability and accuracy of coarsening operators built with different locality widths $r$ for different polynomial degrees. The reported values are independent of the partition size used.}
  \label{tab:coarsening1D}
\end{table}
  
\begin{table}[!hb]
\centering
\footnotesize
\renewcommand{\arraystretch}{1.1}
\setlength{\tabcolsep}{5pt}

\begin{subtable}[t]{0.48\textwidth}
  \centering
  \begin{tabular}{@{}cccc@{}}
    \toprule
    $r$ & $\|B\|_\infty$ & $\|I - AB\|_2$ & $\|I - AB\|_\infty$ \\
    \midrule
    3 & 1.00  & 1.98 & 2.00 \\
    5 & 2.47  & 1.22 & 2.61 \\
    7 & 2.47  & 1.18 & 2.85 \\
    9 & 2.83  & 1.18 & 3.00 \\
    \bottomrule
  \end{tabular}
  \caption{Bilinear splines ($p = 1$).}
  \label{tab:bilinear}
\end{subtable}
\hfill
\begin{subtable}[t]{0.48\textwidth}
  \centering
  \begin{tabular}{@{}cccc@{}}
    \toprule
    $r$ & $\|B\|_\infty$ & $\|I - AB\|_2$ & $\|I - AB\|_\infty$ \\
    \midrule
    4  & 5.44 & 1.55 & 3.12 \\
    6  & 5.25 & 1.55 & 3.14 \\
    8  & 5.25 & 1.15 & 2.95 \\
    10 & 5.23 & 1.14 & 2.90 \\
    12 & 5.23 & 1.13 & 2.83 \\
    \bottomrule
  \end{tabular}
  \caption{Biquadratic splines ($p = 2$).}
  \label{tab:biquad}
\end{subtable}

\vspace{1em}

\begin{subtable}[t]{0.48\textwidth}
  \centering
  \begin{tabular}{@{}cccc@{}}
    \toprule
    $r$ & $\|B\|_\infty$ & $\|I - AB\|_2$ & $\|I - AB\|_\infty$ \\
    \midrule
    5  & 9.62  & 9.94 & 10.19 \\
    7  & 9.62  & 2.06 & 5.86 \\
    9  & 10.64 & 2.01 & 6.26 \\
    11 & 10.64 & 1.76 & 5.97 \\
    13 & 11.40 & 1.75 & 6.20 \\
    15 & 11.40 & 1.71 & 6.11 \\
    \bottomrule
  \end{tabular}
  \caption{Bicubic splines ($p = 3$).}
  \label{tab:bicubic}
\end{subtable}
\hfill
\begin{subtable}[t]{0.48\textwidth}
  \centering
  \begin{tabular}{@{}cccc@{}}
    \toprule
    $r$ & $\|B\|_\infty$ & $\|I - AB\|_2$ & $\|I - AB\|_\infty$ \\
    \midrule
     6  & 22.56 & 4.96 & 11.77 \\
     8  & 22.56 & 4.84 & 11.94 \\
    10  & 20.55 & 2.01 & 7.84 \\
    12  & 20.11 & 2.24 & 7.88 \\
    14  & 20.11 & 1.71 & 7.26 \\
    16  & 19.90 & 2.22 & 7.35 \\
    18  & 19.90 & 2.22 & 6.89 \\
    \bottomrule
  \end{tabular}
  \caption{Biquartic splines ($p = 4$).}
  \label{tab:biquartic}
\end{subtable}

\caption{Tensor-product splines: Stability and accuracy of coarsening operators built with different locality widths $r$ for different polynomial degrees. The reported values are independent of the mesh size used.}
\label{tab:coarsening2D}
\end{table}

\subsection{Successive coarsening in tensor product spline spaces}

We assess the performance of the proposed coarsening operators for tensor-product spline spaces by applying them to a non-spline function. Specifically, we consider
\[
f(x, y) = \arctan\left(5\left[(4x - 3.5)^2 + (4y - 3)^2 - 5\right]\right).
\]
For each polynomial degree \(p = 1, 2, 3, 4\), we select two symmetric values of the coarsening locality width, denoted by \(r\), corresponding to the case \((r_x, r_y) = (r, r)\). We remark that the larger value of \(r\) is chosen to ensure that the resulting operator behaves almost identically to those with greater locality.

The procedure is as follows. The function \(f\) is first approximated in the reference fine space using the \(L^2\)-projection. The resulting fine-level B-splines coefficients are then iteratively coarsened using the proposed local operators, and at each coarsening step the \(L^2\)-error with respect to \(f\) is computed in the corresponding coarse space. These errors are compared against those obtained via the standard \(L^2\)-projection onto the same coarse spaces.

The results are displayed in Figure~\ref{fig:coarsening_errors_all_degrees}, where the \(L^2\)-error is plotted against the number of degrees of freedom (DOFs) for the different polynomial degrees \(p\) and the two symmetric locality widths \((r, r)\) under consideration. 
For $p =1, 2$, both coarsening operators yield approximation errors comparable to those of the optimal $L^2$-projection. For $p=3,4$, however, the operator associated with the smaller locality width $r$ produces noticeably larger errors than those obtained with the larger value of $r$. Importantly, the errors produced by the operator with the larger $r$ is already very close to the $L^2$-projection. These observations indicate that increasing the locality width $r$ improves accuracy while preserving the intrinsic locality of the operator.

\begin{figure}[htbp]
    \centering
    \begin{minipage}{0.48\textwidth}
        \centering
        \includegraphics[width=\linewidth]{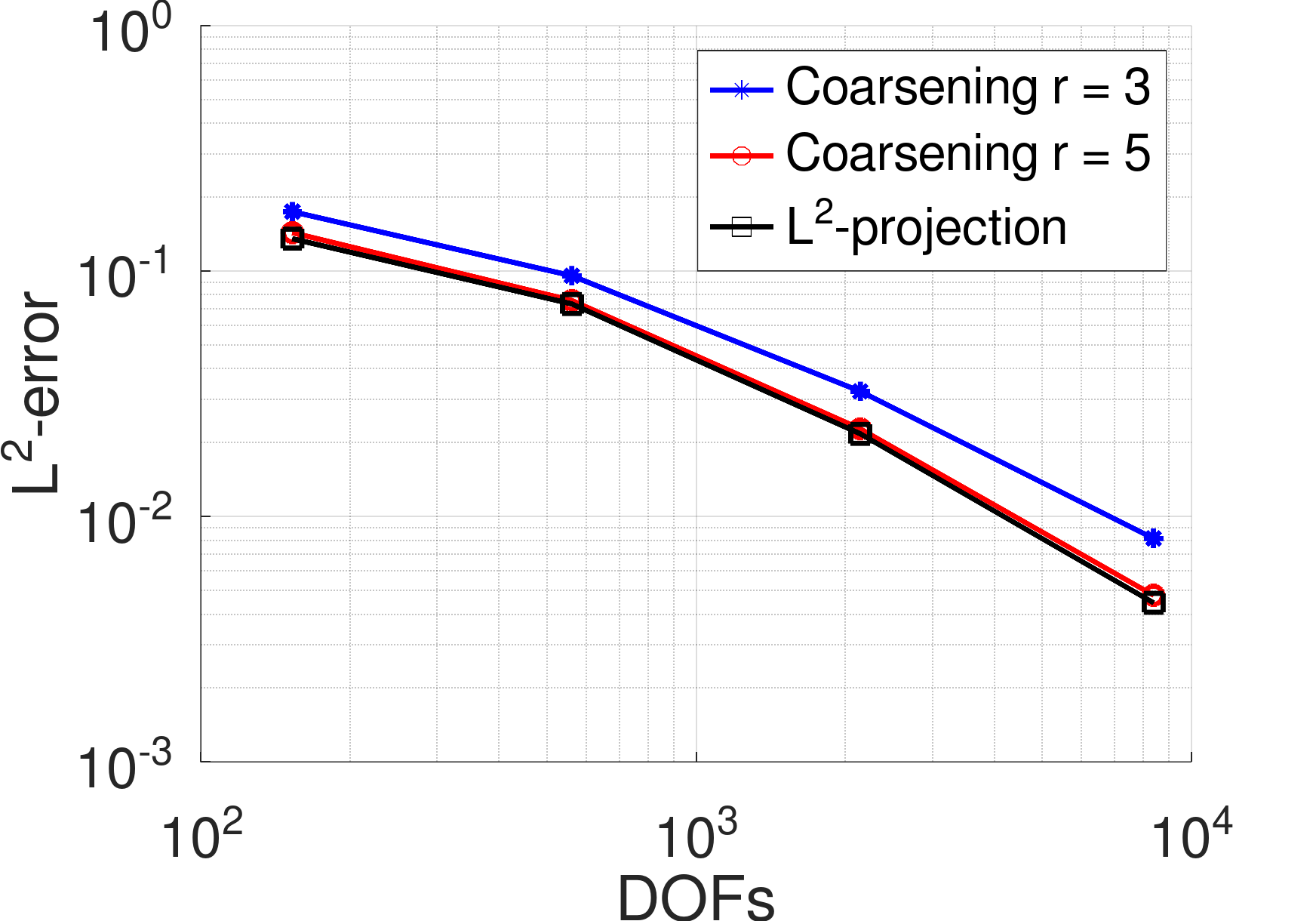}
        \caption*{(a) Bilinear splines ($p = 1$)}
    \end{minipage}
    \hfill
    \begin{minipage}{0.48\textwidth}
        \centering
        \includegraphics[width=\linewidth]{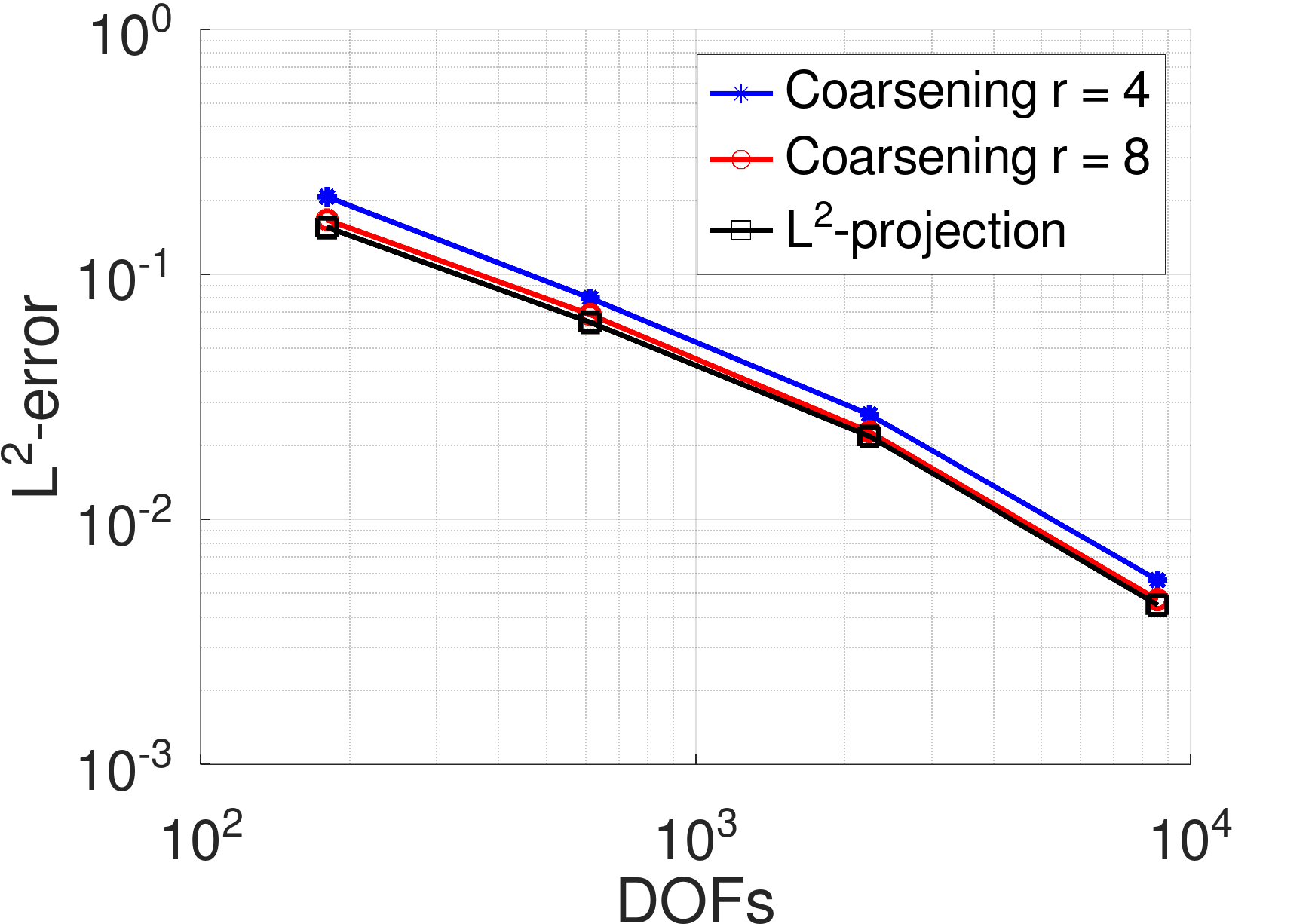}
        \caption*{(b) Biquadratic splines ($p = 2$)}
    \end{minipage}
    
    \vspace{4mm}
    
    \begin{minipage}{0.48\textwidth}
        \centering
        \includegraphics[width=\linewidth]{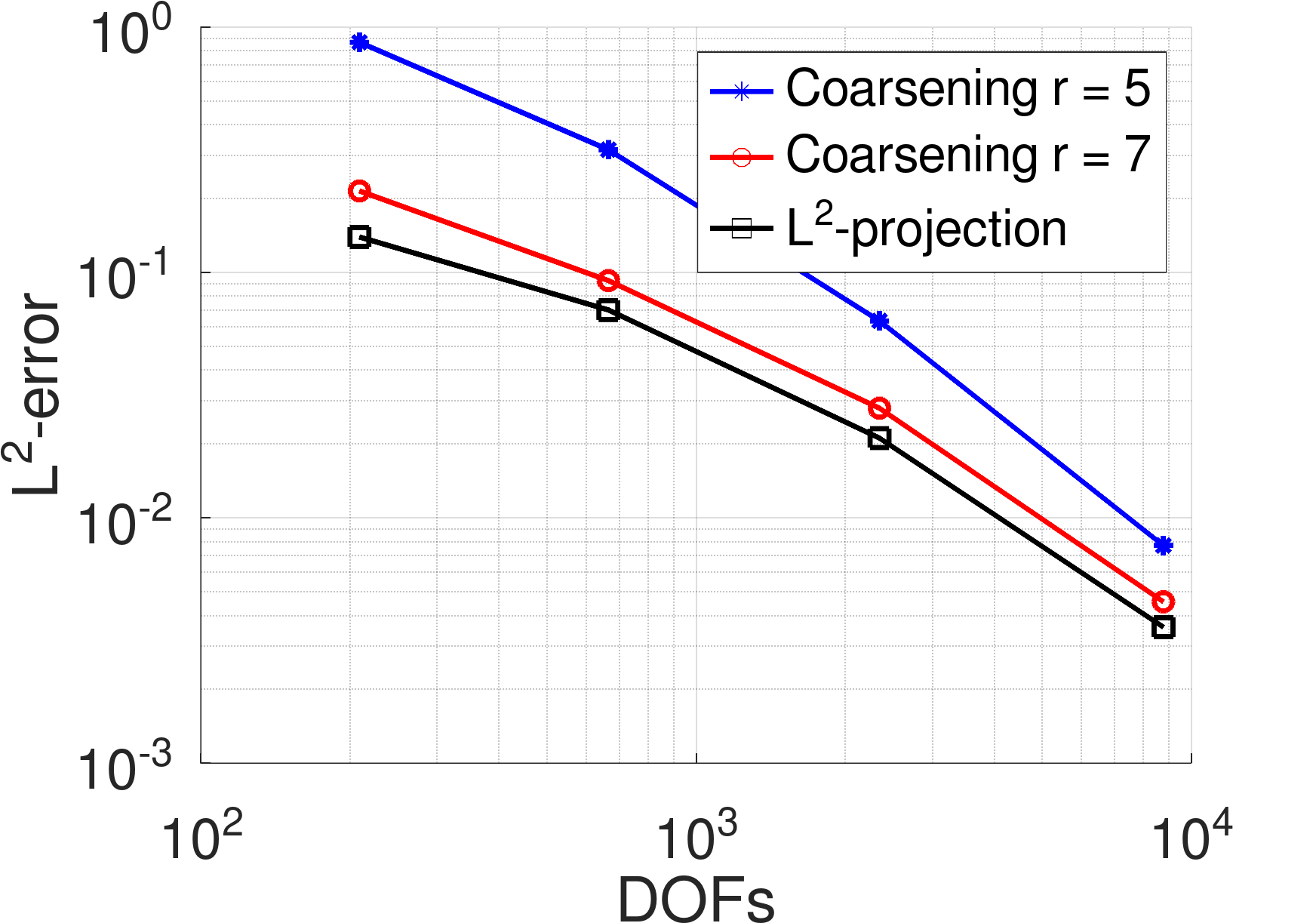}
        \caption*{(c) Bicubic splines ($p = 3$)}
    \end{minipage}
    \hfill
    \begin{minipage}{0.48\textwidth}
        \centering
        \includegraphics[width=\linewidth]{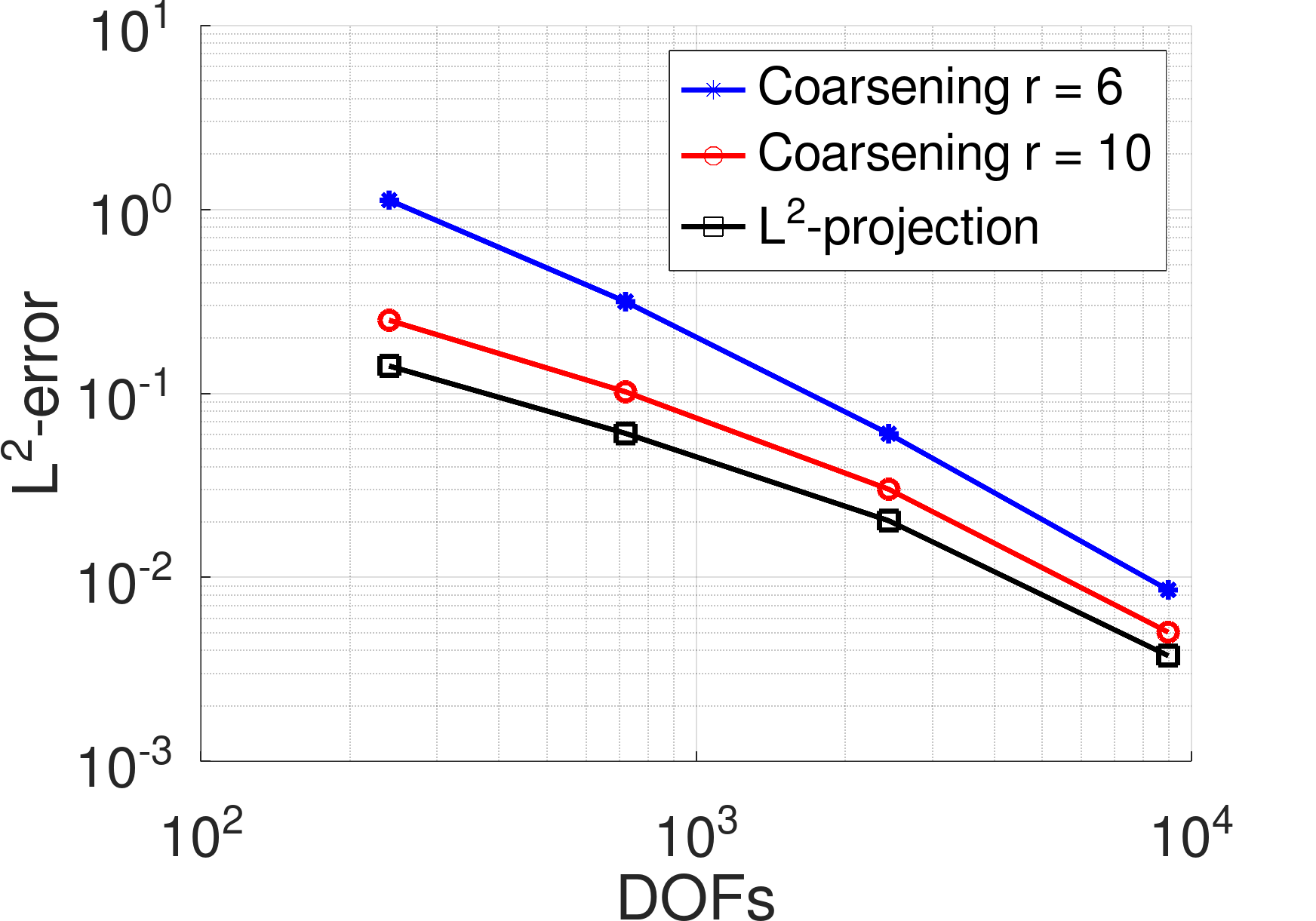}
        \caption*{(d) Biquartic splines ($p = 4$)}
    \end{minipage}
    
    \caption{
        Comparison of $L^2$-approximation errors for a function $f(x, y)$ computed using the proposed local coarsening operators and the standard $L^2$-projection.
        Results are shown for spline degrees $p = 1, 2, 3, 4$ (subfigures (a)–(d), respectively).
        In each case, two symmetric locality widths $(r_x, r_y) = (r, r)$ are considered. The curves indicate that increasing \(r\) systematically improves the approximation quality while preserving the local nature of the operator: in all cases the largest of the two locality widths yields errors that are almost indistinguishable from those obtained by the $L^2$-projection.}
    \label{fig:coarsening_errors_all_degrees}
\end{figure}

\subsection{Single uniform coarsening for a tensor product spline}

We illustrate the performance of the proposed coarsening operators for tensor-product spline spaces by considering a spline function that exhibits a sharp jump along a prescribed path in the domain. More precisely, the B-spline coefficients of the original spline are all zero except for those associated with basis functions in a localized region, which are set to one. In such a scenario, uniform coarsening is expected to produce large approximation errors, since even the standard \(L^2\)-projection onto the coarse mesh displays significant oscillations. In contrast, the coarsening operators studied here, in addition to being considerably faster to apply due to their local character, also have the desirable property of localizing the error: oscillations remain concentrated near the region where the coarse space cannot exactly represent the spline, while the function is preserved elsewhere.

In Figure~\ref{fig:coarsening_deg2}, we consider a biquadratic spline space defined over a $40 \times 40$ element mesh. We present the approximations obtained with the $L^2$-projection and with two coarsening operators, both applied using equal locality widths in each direction, namely $6$ and $8$. The rightmost column displays the sparsity pattern of the B-spline coefficient matrix of the error measured in the fine space. One can observe that the $L^2$-projection modifies all coefficients of the original spline, whereas the coarsening operators modify only $18\%$ and $32\%$ of the coefficients, respectively, when expressed in the fine space. In this example, the relative $L^\infty$-error for width $6$ is about $40\%$, slightly larger than the $29\%$ obtained with the $L^2$-projection. With width $8$, however, the error decreases to $27\%$, comparable to that of the projection.

\begin{figure}[htbp]
  \centering
  \includegraphics[width=\textwidth]{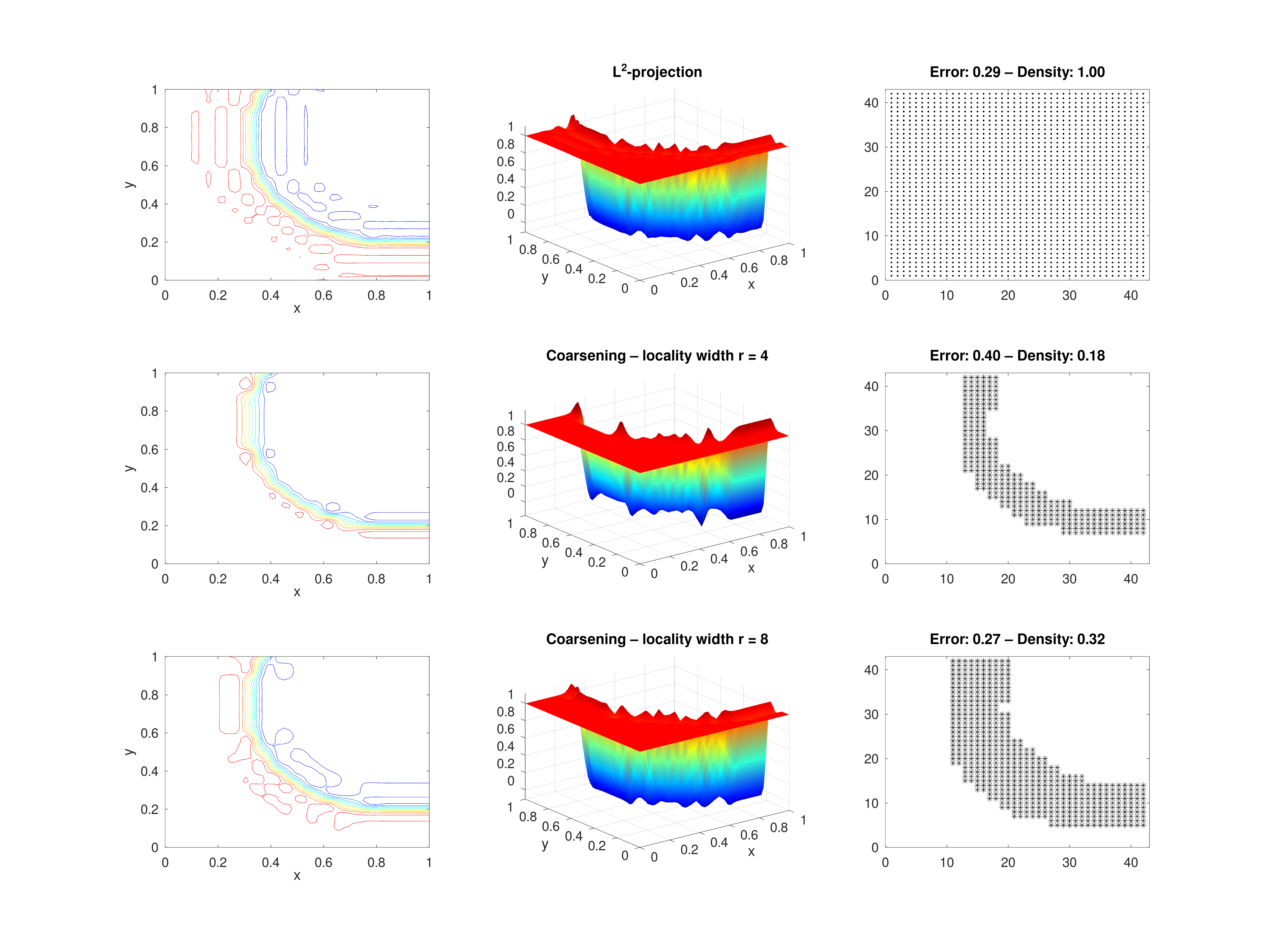}
  \caption{
  Considering a biquadratic spline defined on a tensor-product space with an initial mesh of \(40 \times 40\) elements, we compare three approximations computed on a coarser \(20 \times 20\) mesh. The first row corresponds to the standard \(L^2\)-projection, while the second and third rows show the results obtained with the proposed coarsening operators using locality widths of \(4\) and \(8\) in each parametric direction, respectively. For each case, the left panel shows the contour lines of the approximation, the middle panel displays the 3D surface, and the right panel illustrates the sparsity pattern of the matrix containing the B-spline coefficients of the error. The relative error in the \(L^\infty\)-norm of each approximation is also reported.
  }
  \label{fig:coarsening_deg2}
\end{figure}

In Figure~\ref{fig:coarsening_deg3}, we turn to the case of bicubic splines, again defined over a $40 \times 40$ grid. We compare the $L^2$-projection with two coarsening operators, this time using locality widths of $5$ and $7$. For width $5$, the approximation modifies only a very localized region, but noticeable oscillations appear, leading to reduced overall quality. With width $7$, on the other hand, the approximation reaches a level comparable to the $L^2$-projection while still altering only a relatively small fraction of coefficients, which is advantageous in practice. These observations are in line with the first two rows of Table~\ref{tab:bicubic}.

\begin{figure}[htbp]
  \centering
  \includegraphics[width=\textwidth]{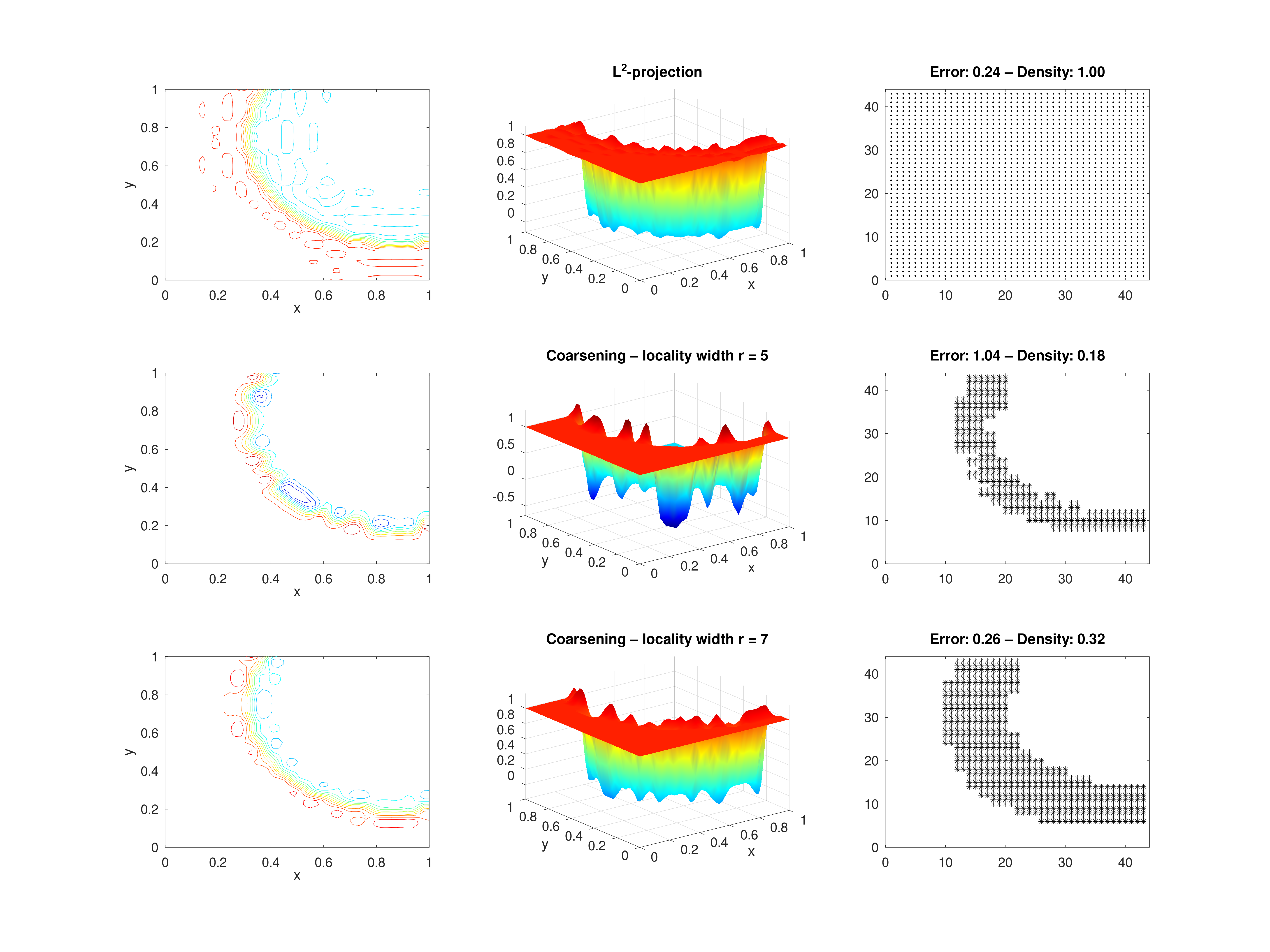}
  \caption{
  Considering a bicubic spline defined on a tensor-product space with an initial mesh of \(40 \times 40\) elements, we compare three approximations computed on a coarser \(20 \times 20\) mesh. The first row corresponds to the standard \(L^2\)-projection, while the second and third rows show the results obtained with the proposed coarsening operators using locality widths of \(5\) and \(7\) in each parametric direction, respectively. For each case, the left panel shows the contour lines of the approximation, the middle panel displays the 3D surface, and the right panel illustrates the sparsity pattern of the matrix containing the B-spline coefficients of the error. The relative error in the \(L^\infty\)-norm of each approximation is also reported.
  }
  \label{fig:coarsening_deg3}
\end{figure}

\section*{Conclusions and future work}

In this work we have introduced and analyzed a class of local coarsening operators for spline spaces, constructed as left inverses of the standard subdivision operators. The proposed approach guarantees exact reproduction on the coarse space while providing stability and controllable approximation quality, as demonstrated through a combination of theoretical results and numerical experiments. In particular, the local nature of the operators makes them computationally efficient and capable of confining approximation errors to the regions where exact representation is not possible, which is an advantage over global procedures such as the $L^2$-projection.

The presented framework opens several directions for further research. One promising direction concerns the integration of the proposed operators into multilevel algorithms, for instance in the context of hierarchical spline constructions. Another natural extension is the study of adaptive coarsening strategies, where the locality width is chosen according to local error indicators. Finally, the analysis of coarsening for more general spline configurations, such as non-uniform meshes, remains an interesting subject for future~investigation.

\bibliographystyle{apalike}

\begin{thebibliography}{}

\bibitem[Bartels and Samavati, 2011]{BRSF2011}
Bartels, R. and Samavati, F. (2011).
\newblock Multiresolutions numerically from subdivisions.
\newblock {\em Computers \& Graphics}, 35(2):185--197.

\bibitem[Bartels et~al., 2006]{BGS2006-local-least-squares}
Bartels, R.~H., Golub, G.~H., and Samavati, F.~F. (2006).
\newblock Some observations on local least squares.
\newblock {\em BIT}, 46(3):455--477.

\bibitem[Bartels and Samavati, 2000]{BS2000}
Bartels, R.~H. and Samavati, F.~F. (2000).
\newblock Reversing subdivision rules: Local linear conditions and observations
  on inner products.
\newblock {\em Journal of Computational and Applied Mathematics},
  119(1-2):29--67.

\bibitem[Brenna, 2002]{Brenna2002}
Brenna, T. (2002).
\newblock Knot removal for tensor product splines.
\newblock {\em Proceedings Published 2002}, page 322.

\bibitem[de~Boor, 2001]{DeBoor}
de~Boor, C. (2001).
\newblock {\em A practical guide to splines}, volume~27 of {\em Applied
  Mathematical Sciences}.
\newblock Springer-Verlag, New York, revised edition.

\bibitem[Figueroa et~al., 2024]{FGM24}
Figueroa, S., Garau, E.~M., and Morin, P. (2024).
\newblock Error analysis for local coarsening in univariate spline spaces.
\newblock {\em Appl. Math. Comput.}, 472:Paper No. 128616, 18.

\bibitem[Giannelli et~al., 2014]{GJS14}
Giannelli, C., J\"{u}ttler, B., and Speleers, H. (2014).
\newblock Strongly stable bases for adaptively refined multilevel spline
  spaces.
\newblock {\em Adv. Comput. Math.}, 40(2):459--490.

\bibitem[Golub and Van~Loan, 2013]{GOLUB2013MC}
Golub, G.~H. and Van~Loan, C.~F. (2013).
\newblock {\em Matrix computations}.
\newblock JHU press.

\bibitem[Kraft, 1997]{Kraft}
Kraft, R. (1997).
\newblock Adaptive and linearly independent multilevel {$B$}-splines.
\newblock In {\em Surface fitting and multiresolution methods
  ({C}hamonix--{M}ont-{B}lanc, 1996)}, pages 209--218. Vanderbilt Univ. Press,
  Nashville, TN.

\bibitem[Lane and Riesenfeld, 1980]{LR1980}
Lane, J.~M. and Riesenfeld, R.~F. (1980).
\newblock A theoretical development for the computer generation and display of
  piecewise polynomial surfaces.
\newblock {\em IEEE Transactions on Pattern Analysis and Machine Intelligence},
  PAMI-2(1):35--46.

\bibitem[Lyche and M{\o}rken, 1987]{LM87}
Lyche, T. and M{\o}rken, K. (1987).
\newblock A discrete approach to knot removal and degree reduction algorithms
  for splines.
\newblock In {\em Algorithms for approximation ({S}hrivenham, 1985)}, volume~10
  of {\em Inst. Math. Appl. Conf. Ser. New Ser.}, pages 67--82. Oxford Univ.
  Press, New York.

\bibitem[Schumaker, 2007]{Schumi}
Schumaker, L.~L. (2007).
\newblock {\em Spline functions: basic theory}.
\newblock Cambridge Mathematical Library. Cambridge University Press,
  Cambridge, third edition.

\end{thebibliography}

	\end{document}